\newtheorem{theorem}{Theorem}
\newtheorem{proposition}{Proposition}
\newtheorem{lemma}{Lemma}
\newtheorem{remark}{Remark}
\newtheorem{definition}{Definition}
\newtheorem{example}{Example}
\DeclareMathOperator{\diag}{diag}
\begin{document}
\title{\LARGE \bf  Resilient Estimation and Control on $k$-Nearest Neighbor Platoons:  A Network-Theoretic Approach}

\author{Mohammad Pirani, Ehsan Hashemi, Baris Fidan, John W. Simpson-Porco, Henrik Sandberg,\\ Karl Henrik Johansson
\thanks{ M. Pirani, H. Sandberg and K. H. Johansson are with the department of automatic control, KTH Royal Institute of Technology, Stockholm, Sweden  E-mail: {\texttt{pirani, hsan, kallej@kth.se}}.
E. Hashemi and B. Fidan   are with the Department of Mechanical and Mechatronics Engineering, University of Waterloo, ON, N2L 3G1, Canada  (email: ehashemi,  fidan @uwaterloo.ca).  John W. Simpson-Porco is with the Department of Electrical and Computer Engineering at the University of Waterloo, Waterloo, ON, Canada. E-mail: {\texttt{jwsimpson@uwaterloo.ca}}.
}}

\maketitle

\thispagestyle{empty}
\pagestyle{empty}


\begin{abstract}
This paper is concerned with the network-theoretic properties of so-called $k$-nearest neighbor intelligent vehicular platoons, where each vehicle communicates with $k$ vehicles, both in front and behind. The network-theoretic properties analyzed in this paper play major roles in quantifying the resilience and robustness of three generic distributed estimation and control algorithms against communication failures and disturbances, namely resilient distributed estimation, resilient distributed consensus, and robust network formation. Based on the results for the connectivity measures of the $k$-nearest neighbor platoon, we show that extending the traditional platooning topologies (which were based on interacting with nearest neighbors) to $k$-nearest neighbor platoons increases the resilience of distributed estimation and control algorithms to both communication failures and disturbances. Finally, we discuss how the performance of each algorithm scales with the size of the vehicle platoon.

\end{abstract}

\section{Introduction}

Intelligent transportation systems are an important real-world instance of a multi-disciplinary cyber-physical system \cite{Stankovic, Shensherman}. In addition to classical electromechanical engineering, designing intelligent transportation systems requires synergy with and between outside disciplines, including communications, control, and network theory. In this direction, estimation and control theory are pivotal parts in designing algorithms for the active safety of automotive and intelligent transportation systems \cite{Hedrikburelli, Trimpe, BartBesselink, PiraniITS}.  From another perspective, networks of connected vehicles are quite naturally mathematically modeled using tools from networks and  graph theory, with associated notions such as degree, connectivity and expansion. While these modeling tools are in general distinct, the primary goal of this paper is to investigate connections between the control-theoretic and network-theoretic approaches to intelligent platoons.

The interplay between the network and system-theoretic concepts in network control systems has attracted much attention in recent years \cite{Leonard, ArxiveRobutness}. There is a vast literature on revisiting the traditional system-theoretic notions from the network's perspective. In this direction, some new notions have emerged such as {\it network coherence} \cite{bamieh2012coherence, piranicdc} which is interpreted as the $\mathcal{H}_2$ and $\mathcal{H}_{\infty}$ norms of a network dynamical system showing the ability of the network in mitigating the effect of disturbances. Moreover, some other system performance metrics such as the controllability and observability have been revisited in networks \cite{Pasqualetti, Sundaram2011}. In all of these problems, network properties come into play in the form of necessary and/or sufficient conditions to satisfy specific system performance characteristics. The advantage of this approach is in large-scale networks for which working with systemic notions is a burdensome task and tuning network properties is more implementable.

The above-mentioned reciprocity between the system and network-theoretic concepts finds many applications in mobile networks and in particular in networks of connected vehicles. There is much research on designing distributed estimation and control algorithms for traffic networks to ensure the safety or optimality of the energy consumption \cite{BartBesselink, BartBesselink2, LiangJohansson}. In all of those settings, there exist system-theoretic conditions which ensure the effectiveness of the proposed algorithms. However, as the scale of the network increases and the interactions become more sophisticated, e.g., from simple platooning to more complex topologies, testing those system-theoretic conditions becomes harder and the need to redefine those conditions in terms of network-theoretic properties is seriously felt. To this end, our approach is to reinterpret the performance of distributed estimation and control algorithms in terms of graph-theoretic properties of $k$-nearest neighbor platoons. We first quantify how densely connected this network is, as there are many non-equivalent metrics used in the literature to quantify the network connectivity. Then we  make a connection between each connectivity measure with its corresponding system performance metric. From this view, the contributions of this paper are:

\begin{itemize}
    \item We discuss some network connectivity measures for a generalized form of vehicle platoons (called $k$-nearest neighbor platoon) and show that this particular network topology provides high levels of connectivity for most of the connectivity measures. Interestingly, most of these measures depend only on the number of local interactions of each vehicle in the platoon.

    \item We apply the connectivity measures of $k$-nearest neighbor platoon to provide network-theoretic conditions for the performance of three well-known distributed estimation and control algorithms and show the positive effect of such network topology in enhancing the resilience of those algorithms. We also discuss the role of the network scaling on the performance of each algorithm.

\end{itemize}
\textbf{Applications to Connected Vehicles:} The specific network structure discussed in this paper provides an appropriate representation of highway traffic networks. More particularly, the widely used Dedicated Short-Range Communications (DSRC), which are two-way short-to-medium range wireless communications, provide a communication channel which enables vehicles to communicate up to a specific distance (about 1000 meters) \cite{Harvey}. Hence, such a geometric-based topology in highway driving is compatible with the network structure discussed in this paper. There are different quantities that vehicles can share between each other via DSRC. Among them, some physical vehicle states, e.g., velocity or acceleration, important spatiotemporal parameters, such as road friction coefficients, or even vehicle's status, e.g., braking status, can be disseminated throughout the network based on today inter-vehicular communication standards. Each vehicles can use such information obtained from other vehicles to increase the reliability of its own estimation or measurement or to make predictions about the particular quantity that it will measure in the future.

The paper is organized as follows. After introducing some notations and definitions in Section \ref{sec:deffin},  we investigate network-theoretic connectivity measures of $k$-nearest neighbor platoons in Section \ref{sec:nettheprop}, namely {\it network vertex and edge connectivity}, {\it network robustness}, {\it network expansion} and {\it algebraic connectivity}. Based on those connectivity measures, in Section \ref{sec:disest} we discuss three distributed estimation and control algorithms and apply those network-theoretic results to the robustness measures of these algorithms. In particular, in Section \ref{sec:disest}. A, we discuss a {\it robust distributed estimation} technique on $k$-nearest neighbor platoons and a sufficient condition under which a vehicle can estimate states of other vehicles in a distributed manner, despite failures in inter-vehicular communication. In Section \ref{sec:disest}. B, vehicles try to reach a {\it consensus} on a value, e.g., velocity or road condition, and the algorithm should be again robust to communication failures between vehicles in the network. Finally, in Section \ref{sec:disest}. C, vehicles aim to perform a {\it robust network formation} algorithm (forming a stable, rigid platoon, with specified inter-vehicular distances), in the presence of communication disturbances. After introducing all three distributed estimation and control algorithms, in Theorems \ref{thm:robdeisdst}, \ref{thm:shreyashghgsmmaster} and \ref{thm:sqrt23}, we re-interpret these results in terms of specific network properties of $k$-Nearest Neighbor platoons and verify the results with some simulations. These analyses are schematically shown in Fig. \ref{fig:sstdvqwefnj}.
\begin{figure}[t!]
\centering
\includegraphics[scale=.42]{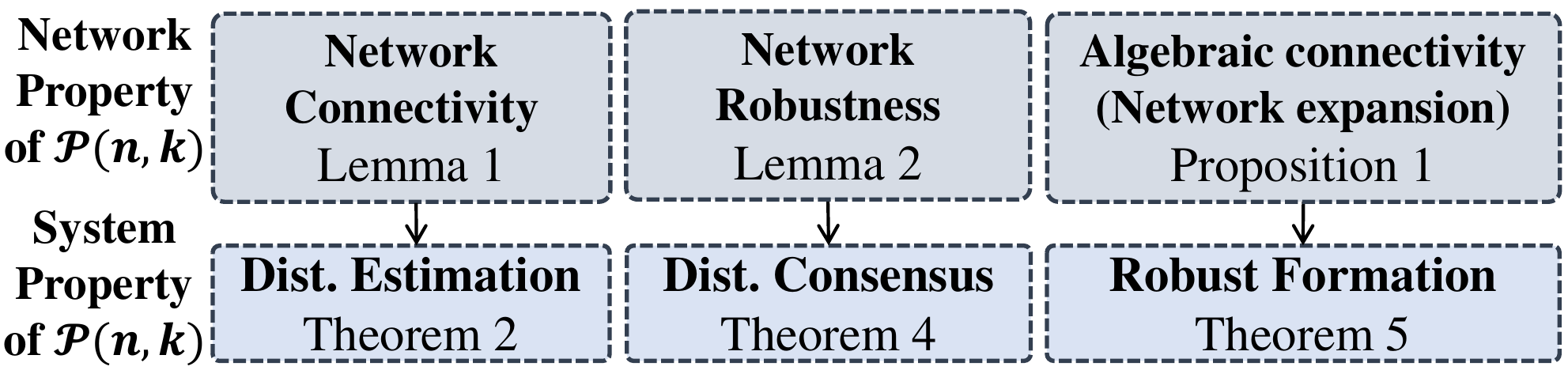}
\caption{Network-theoretic approaches to the performance of distributed estimation  and control algorithms on $k$-nearest neighbor platoons, $\mathcal{P}(n,k)$.}
\label{fig:sstdvqwefnj}
\end{figure}

\section{Notations and Definitions} \label{sec:deffin}
In this paper, an undirected network (graph) is denoted by  $\mathcal{G}=(\mathcal{V},\mathcal{E})$,  where $\mathcal{V} = \{v_1, v_2, \ldots, v_n\}$ is the set of nodes (or vertices) and $\mathcal{E} \subset \mathcal{V}\times\mathcal{V}$ is the set of edges. Neighbors of node $v_i \in \mathcal{V}$ are given by the set $\mathcal{N}_i = \{v_j \in \mathcal{V} \mid (v_i, v_j) \in \mathcal{E}\}$. The  degree of each node $v_i$ is denoted by $d_i=|\mathcal{N}_i|$ and the minimum and maximum degrees in graph $\mathcal{G}$ are shown by $d_{\rm min}$ and $d_{\rm max}$, respectively. The  adjacency matrix of the graph is a symmetric and binary $n \times n$  matrix $A$, where element $A_{ij}=1$ if $(v_i, v_j) \in \mathcal{E}$ and zero otherwise.  For a given set of nodes $X \subset \mathcal{V}$, the {\it edge-boundary} (or just boundary) of the set is defined as $\partial{X} \triangleq \{(v_i,v_j) \in \mathcal{E} \mid v_i \in X, v_j \in \mathcal{V}\setminus{X}\}$. The {\it isoperimetric constant} of $\mathcal{G}$ is defined as \cite{ChungSpectral}
 \begin{equation}
 i(\mathcal{G})\triangleq \min_{S \subset V, |S| \le \frac{n}{2}}\frac{|\partial S|}{|S|}.
 \label{eqn:iso}
 \end{equation}
 where $\partial S$ is the  edge-boundary of a set of nodes $S \subset V$. The Laplacian matrix of the graph is $L \triangleq D - A$, where $D = \diag(d_1, d_2, \ldots, d_n)$.  The eigenvalues of the Laplacian are real and nonnegative, and are denoted by $0 = \lambda_1(L) \le \lambda_2(L) \le \ldots \le \lambda_n(L)$ and $\lambda_2(L)$ is called the algebraic connectivity of the network \cite{Godsil}.  Given a connected  graph $\mathcal{G}$, an orientation of the graph $\mathcal{G}$ is defined by assigning  a direction (arbitrarily) to each edge in $\mathcal{E}$. For graph $\mathcal{G}$ with $m$ edges, numbered as $e_1, e_2, ..., e_m$, its node-edge incidence matrix $\mathcal{B}(\mathcal{G})\in \mathbb{R}^{n\times m}$ is defined as \cite{Fiedler}
$$[\mathcal{B}(\mathcal{G})]_{kl}=
  \begin{cases}
    1       & \quad  \text{if node $k$ is the head of edge $l$},\\
   -1  & \quad \text{if node $k$ is the tail of edge $l$},\\
   0  & \quad \text{otherwise}.\\
  \end{cases}
  $$
The graph Laplacian satisfies $L=\mathcal{B}(\mathcal{G})\mathcal{B}(\mathcal{G})^{\sf T}$ \cite{Godsil}. 

For positive integers $n,k \geq 1$ such that $n \geq k$, a $k$-Nearest Neighbor platoon containing $n$ vehicles, which we denote as $\mathcal{P}(n,k)$, is a specific class of networks which captures the physical properties of wireless sensor networks  in vehicular platoons. It is a network comprised of $n$ nodes (or vehicles),  where each node can communicate with its $k$ nearest neighbors from its back and $k$ nearest neighbors from its front, for some $k \in \mathbb{N}$. This definition is compatible with wireless sensor networks,  due to the limited sensing and communication range for each  vehicle and the distance between the consecutive vehicles \cite{PiraniITS}. An example of such network topology is shown in Fig. \ref{fig:2nndef}.
\begin{figure}[t!]
\centering
\includegraphics[width=1\linewidth]{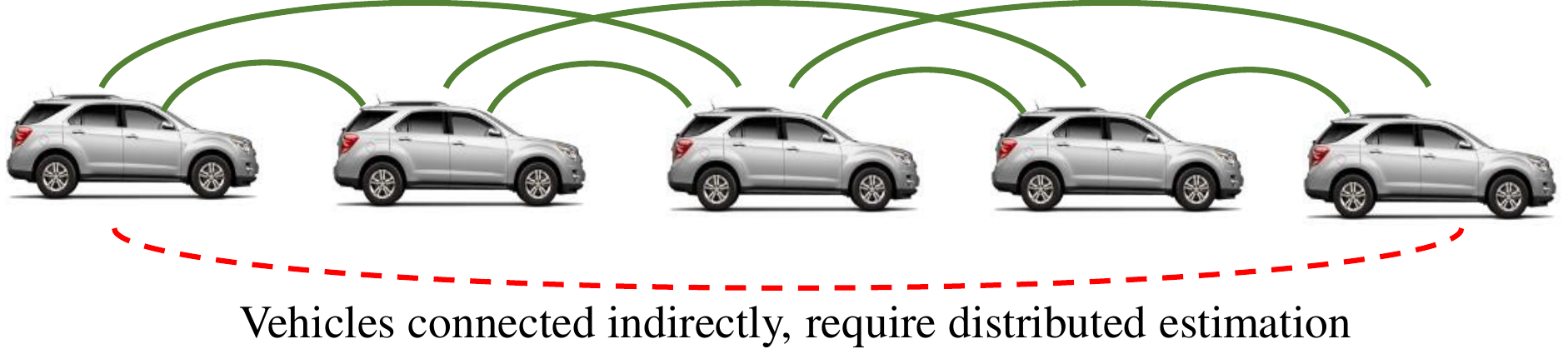}
\caption{An example of $\mathcal{P}(5,2)$ with $n=5$ and $k=2$. Green lines denote communication links.}
\label{fig:2nndef}
\end{figure}

\section{Network-Theoretic Properties} \label{sec:nettheprop}

 In this section, we examine four network connectivity measures which, as we will see, each play a fundamental role in understanding the system-theoretic performance of different algorithms on $k$-nearest neighbor platoons. These  properties, as mentioned in the previous sections, are {\it network connectivity, network robustness}, and {\it network expansion} and {\it algebraic connectivity}. Fig. \ref{fig:sstdvnj} (b) provides a visual sense of the strength of each of these connectivity measures in general graphs \cite{Ebiautomatica}. Fig \ref{fig:sstdvnj} (c) shows the values of each connectivity measure in $k$-nearest neighbor platoons which are discussed in detail in the subsequent subsections.  The main insight is that, while these connectivity notions are distinct in general networks, they collapse to one equivalent notion of connectivity for $k$-nearest neighbor platoons.

\subsection{Vertex and Edge Connectivity}
First, we have the following definitions of graph vertex and edge connectivities. 

\begin{definition}[\textbf{Cuts in Graphs}]
A \emph{vertex-cut} in a graph $\mathcal{G}=\{\mathcal{V}, \mathcal{E}\}$ is a subset $\mathcal{S} \subset \mathcal{V}$  of vertices such that removing the vertices in $\mathcal{S}$ (and any resulting dangling edges) from the graph causes the remaining graph to be disconnected. A \emph{$(j, i)$-cut} in a graph is a subset $\mathcal{S}_{ij}\subset \mathcal{V}$ such that if the nodes $\mathcal{S}_{ij}$ are removed, the resulting graph contains no path from vertex $v_j$ to vertex $v_i$.   Let $\kappa_{ij}$ denote the size of the smallest $(j, i)$-cut between any two vertices $v_j$ and $v_i$. The graph $\mathcal{G}$ is said to have \emph{vertex connectivity} $\kappa(\mathcal{G})=\kappa$ (or \emph{$\kappa$-vertex-connected}) if $\kappa_{ij}=\kappa$ for all $i,j \in \mathcal{V}$. The \emph{edge connectivity} $e(\mathcal{G})$ of a graph  $\mathcal{G}$ is the minimum number of edges whose deletion disconnects the graph.
\end{definition}

For the vertex and edge connectivity and graph's minimum degree the following inequalities hold 
\begin{equation}
\kappa(\mathcal{G})\leq e(\mathcal{G})\leq d_{\rm min}.
\label{eqn:veredgemin}
\end{equation}
\label{def:connectedge}
The following lemma discusses the connectivity of $k$-nearest neighbor platoons. 

\begin{lemma}
A $k$-nearest neighbor platoon $\mathcal{P}(n,k)$ is a $k$-vertex and a $k$-edge connected graph, i.e., $\kappa(\mathcal{G})=e(\mathcal{G})=k$.
\label{lem:kconnectedprop}
\end{lemma}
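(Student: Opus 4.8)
The plan is to establish the two equalities $\kappa(\mathcal{P}(n,k))=k$ and $e(\mathcal{P}(n,k))=k$ by sandwiching both quantities between $k$ (lower bound) and $k$ (upper bound), using the chain of inequalities \eqref{eqn:veredgemin}. First I would compute $d_{\min}$ for $\mathcal{P}(n,k)$: label the vehicles $v_1,\dots,v_n$ in platoon order, so that $v_i \sim v_j$ iff $0 < |i-j| \le k$. The two end vehicles $v_1$ and $v_n$ each have exactly $k$ neighbors (only one side is populated, assuming $n \ge k+1$; the degenerate case $n \le k+1$ gives a complete graph, which should be dispatched separately or noted to be $(n-1)$-connected, consistent with $k$ when $n=k+1$). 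Interior vehicles have degree between $k+1$ and $2k$. Hence $d_{\min}=k$, and \eqref{eqn:veredgemin} immediately gives the upper bounds $\kappa(\mathcal{P}(n,k)) \le e(\mathcal{P}(n,k)) \le k$. It therefore suffices to prove the single lower bound $\kappa(\mathcal{P}(n,k)) \ge k$, since $\kappa \le e$ then forces $e \ge k$ as well. Actually, one can get the matching lower bounds more directly: exhibiting a specific cut of size $k$ (for edges, the $k$ edges incident to $v_1$; for vertices, $\{v_2,\dots,v_{k+1}\}$, which separates $v_1$ from the rest) confirms the upper bounds concretely, and then only $\kappa \ge k$ remains.

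To prove $\kappa(\mathcal{P}(n,k)) \ge k$, I would invoke Menger's theorem: it is equivalent to show that between any two non-adjacent vertices $v_i$ and $v_j$ (say $i<j$, with $j-i > k$) there exist $k$ internally vertex-disjoint paths. The natural construction exploits the ``overlapping window'' structure. For each $r \in \{0,1,\dots,k-1\}$, build a path from $v_i$ to $v_j$ that advances in steps: from $v_i$ jump to $v_{i+k-r}$, then repeatedly add $k$ to the index (i.e., $v_{i+k-r} \to v_{i+2k-r} \to \cdots$) until the index is within $k$ of $j$, then jump to $v_j$. Different values of $r$ yield paths whose intermediate vertices lie in distinct residue classes modulo $k$ (shifted appropriately), hence are internally disjoint; each consecutive pair of indices on a path differs by at most $k$, so every step is a genuine edge of $\mathcal{P}(n,k)$. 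Care is needed near the endpoints so that the paths remain disjoint there and stay within $\{1,\dots,n\}$ — this bookkeeping, making the residue-class argument fully rigorous and handling the short-path boundary cases, is the main obstacle. An alternative, perhaps cleaner, route is to quote the known fact that the $k$-th power $C_n^k$ of a cycle (or more precisely the graph $P_n^k$, the $k$-th power of a path) is $k$-connected, but since $\mathcal{P}(n,k)$ is exactly $P_n^k$, it is probably cleanest to give the disjoint-paths construction explicitly.

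Finally, I would assemble the pieces: $d_{\min}=k$ gives $\kappa \le e \le k$ via \eqref{eqn:veredgemin}; the explicit $k$ internally-disjoint paths between every pair of vertices give $\kappa \ge k$ by Menger's theorem; combining, $k \le \kappa \le e \le k$, so $\kappa(\mathcal{P}(n,k)) = e(\mathcal{P}(n,k)) = k$, which is the claim. I expect the statement of the theorem implicitly assumes the non-degenerate regime $n \ge k+1$ (so that the platoon is not complete and the two endpoints genuinely have degree $k$); I would state this assumption explicitly at the start of the proof, or remark that for $n \le k$ the graph $\mathcal{P}(n,k)=K_n$ has $\kappa=e=n-1$.
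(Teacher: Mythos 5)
Your proposal is correct, but the key step --- the lower bound $\kappa(\mathcal{P}(n,k))\ge k$ --- is argued by a genuinely different route than the paper's. You invoke Menger's theorem and construct $k$ internally disjoint $v_i$--$v_j$ paths indexed by residue classes modulo $k$, and you rightly flag the endpoint bookkeeping as the delicate part. The paper instead avoids Menger entirely with a short contradiction/greedy argument: if a cut $\mathcal{S}_{ij}$ has size $\bar k<k$, then from any surviving vertex $v_\ell$ with $\ell<j$ at least one of its $k$ forward neighbors $v_{\ell+1},\dots,v_{\ell+k}$ (or $v_j$ itself, once $j\le \ell+k$) lies outside the cut, so one can always advance the index and eventually reach $v_j$, contradicting that $\mathcal{S}_{ij}$ separates. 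This sidesteps exactly the disjointness bookkeeping you identify as the main obstacle, at the cost of not producing the explicit path system (which your construction does, and which could be of independent use, e.g., for resilient routing). Both arguments then finish identically: $d_{\min}=k$ and the chain $\kappa\le e\le d_{\min}$ give the matching upper bounds and pin down the edge connectivity. Your explicit treatment of $d_{\min}=k$ and of the degenerate regime $n\le k+1$ is more careful than the paper's, which asserts $d_{\min}=k$ without comment; neither issue affects correctness in the intended regime $n\ge k+1$.
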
 

\begin{proof}
We prove this result via contradiction. Suppose $\mathcal{P}(n,k)$ is a $\bar{k}$-connected graph, with $\bar{k}< k$. Thus, there exists a minimum vertex cut $\mathcal{S}_{ij}$ between two vertices $v_i$ and $v_j$ where $|\mathcal{S}_{ij}|=\bar{k}$. Without loss of generality, label the vertices from $v_i$ to $v_j$ as $v_i, v_{i+1}, ..., v_j$. Since $\bar{k}< k$, there is a vertex $\bar{v}$ among $v_{i+1},..., v_{i+k}$ (which are directly connected to $v_i$) which does not belong to $\mathcal{S}_{ij}$. By replacing $v_i$ with $\bar{v}$ in the above discussion, we will find a path from $v_i$ to $v_j$ which does not include vertices in $\mathcal{S}_{ij}$ and this contradicts the claim that $\mathcal{S}_{ij}$ is a vertex cut. Hence $\mathcal{P}(n,k)$ is a $k$-vertex connected graph.  For the edge connectivity, observe that for graphs $\mathcal{P}(n,k)$ we have $d_{\rm min}=k$.  The result then follows immediately from \eqref{eqn:veredgemin}.
\end{proof}

\subsection{Network Robustness}

The notion of network robustness is another network connectivity measure, which finds application in the study of distributed consensus algorithms.

\begin{definition}[ \textbf{$r$-Reachable/Robust Graphs}\cite{Leblank}]
Let $r \in \mathbb{N}$.  A subset $S \subset \mathcal{V}$ of nodes in the graph $\mathcal{G}=(\mathcal{V}, \mathcal{E})$ is said to be {\it $r$-reachable} if there exists a node $v_j \in S$ such that $|\mathcal{N}_j\setminus S| \ge r$.  A graph $G=(\mathcal{V}, \mathcal{E})$ is said to be {\it $r$-robust} if for every pair of nonempty, disjoint subsets of $\mathcal{V}$, at least one of them is $r$-reachable.
\label{def:robustnessr}
\end{definition}

Generally speaking, $r$-robustness is a stronger notion than $r$-connectivity \cite{Hogan}, as shown in the following example. 
\begin{example}
The graph shown in Fig. \ref{fig:sstdvnj} (a) is comprised of two complete graphs on $n$ nodes ($S_1$ and $S_2$) and each node in $S_1$ has exactly one neighbor in $S_2$ and vice-versa. The minimum degree and the vertex connectivity are both $n$; however, the network is only 1-robust. 
\begin{figure}[t!]
\centering
\includegraphics[scale=.42]{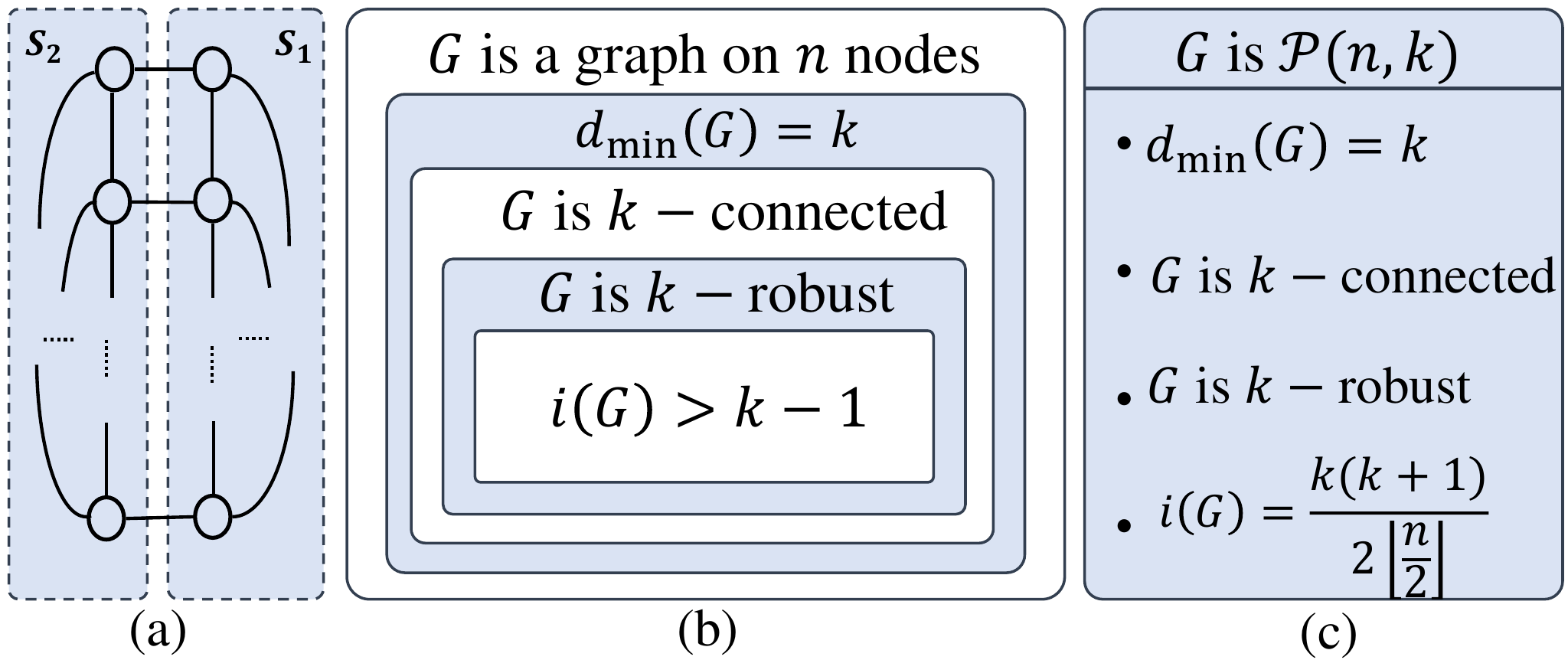}
\caption{(a) A graph with a large connectivity and small robustness, (b) Venn diagram of network connectivity measures for general graphs, (c) Connectivity measures for $k$-nearest neighbor platoons.}
\label{fig:sstdvnj}
\end{figure}
\label{exp:one}
\end{example}

As discussed in Example \ref{exp:one} and schematically shown in the Venn diagram in Fig. \ref{fig:sstdvnj} (b), the network minimum degree, network connectivity and network robustness have different strength in general graphs. However, our next result shows that these notions coincide for $k$-nearest neighbour platoons. 


 Based on the above definition of network robustness, we have the following lemma for the robustness of $k$-nearest neighbor platoons. 
 \begin{lemma}
 A $k$-nearest neighbor platoon $\mathcal{P}(n,k)$ is a $k$-robust network.
 \label{lem:rocbju}
 \end{lemma}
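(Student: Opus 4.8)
The plan is to show that every pair of nonempty, disjoint vertex sets $S_1, S_2 \subset \mathcal{V}$ has at least one set that is $k$-reachable, by exploiting the linear order of the vehicles $v_1, v_2, \ldots, v_n$ in the platoon. The key observation is that in $\mathcal{P}(n,k)$, vertex $v_i$ is adjacent to exactly $\{v_{i-k}, \ldots, v_{i-1}, v_{i+1}, \ldots, v_{i+k}\} \cap \mathcal{V}$, i.e. to all vehicles within index-distance $k$. So ``reachability out of a set'' is controlled entirely by how the set sits on the line.

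First I would reduce to a canonical configuration. Given disjoint nonempty $S_1, S_2$, let $v_p$ be the vehicle of \emph{largest} index lying in $S_1 \cup S_2$; say without loss of generality $v_p \in S_1$. Consider the $k$ vehicles immediately preceding $v_p$, namely $v_{p-1}, v_{p-2}, \ldots, v_{p-k}$ (if $p \le k$, take $v_1, \ldots, v_{p-1}$, and note then $d_p = p-1$ so $v_p$ already has all of $\mathcal{V}\setminus\{v_p\}$ in its closed neighborhood — handle this boundary case separately, and it is immediate). All of these preceding vehicles are neighbors of $v_p$. Now split on whether any of them lies in $S_2$: if \emph{none} of $v_{p-1}, \ldots, v_{p-k}$ is in $S_2$, then I would instead examine $v_q$, the largest-indexed vehicle in $S_2$; since $q < p$ and $q \ge p-k$ is impossible under the current assumption only if all of $v_{p-1},\dots,v_{p-k}\notin S_2$... — more cleanly, I would argue directly on the highest-index vehicle and its $k$ predecessors, as follows.

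The cleaner version: let $v_m$ be the vehicle of largest index in $S_1 \cup S_2$, WLOG $v_m \in S_1$, and let $v_\ell$ be the vehicle of largest index in $S_2$, so $\ell < m$. The neighbors of $v_\ell$ include $v_{\ell+1}, v_{\ell+2}, \ldots, v_{\ell+k}$ (truncated at $n$, but since $\ell < m \le n$ at least $v_{\ell+1}$ exists). None of these vehicles lies in $S_2$, because $v_\ell$ was the largest-indexed member of $S_2$. Hence $|\mathcal{N}_\ell \setminus S_2| \ge |\{v_{\ell+1}, \ldots, v_{\ell+k}\} \cap \mathcal{V}|$. If $\ell + k \le n$ this is exactly $k$, so $S_2$ is $k$-reachable and we are done. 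The remaining case is $\ell + k > n$, i.e. $v_\ell$ is within the last $k$ vehicles; then I would symmetrically use the \emph{smallest}-indexed vehicle of $S_1 \cup S_2$ and its $k$ successors — in $\mathcal{P}(n,k)$ the first and last $k$ vehicles cannot \emph{both} be "close to the boundary" in a way that defeats the argument, because $n \ge k$ guarantees that looking forward from the globally-smallest index or backward from the globally-largest index always exposes $k$ genuine out-neighbors unless the set spans nearly the whole line, a degenerate situation I would dispatch by hand.

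The main obstacle, as the sketch shows, is the boundary bookkeeping: near the two ends of the platoon the degree drops below $k$ only for vehicles with index $\le k$ or $\ge n-k+1$, and one must make sure that for \emph{any} disjoint pair $S_1,S_2$ at least one "exposed" vehicle (extreme index of $S_2$, looking one way, or extreme index of $S_1$, looking the other) genuinely has $k$ out-neighbors. I expect this to come down to: pick the globally extreme vehicle overall; whichever set it belongs to, the \emph{other} set's extreme vehicle on that same end has all $k$ of its neighbors on the far side pointing away from that set, and those $k$ neighbors all exist because that other extreme vehicle is at distance at least... — the careful case split on whether the extreme vehicles fall in the first/last $k$ positions is the only real work, and everything else is the trivial structural fact about adjacency in $\mathcal{P}(n,k)$.
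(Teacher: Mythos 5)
Your main-case argument is correct and actually supplies the substance that the paper's own one-line proof leaves implicit: taking the largest-indexed vertex $v_\ell$ of whichever set does not contain the global maximum, all of its forward neighbors $v_{\ell+1},\dots,v_{\ell+k}$ lie outside that set, so the set is $k$-reachable whenever $\ell+k\le n$. The problem is the case you defer, where the relevant extreme vertices of both sets fall within the first or last $k$ positions, and this is a genuine gap rather than bookkeeping. Your proposed symmetric fix (pass to the smallest-indexed vertex of the other set and count its backward neighbors) demonstrably does not close it: in $\mathcal{P}(5,2)$ with $S_1=\{v_1,v_4\}$ and $S_2=\{v_2,v_5\}$, the global maximum $v_5$ lies in $S_2$, so you examine $v_4$ and find only one forward neighbor ($v_5$); the global minimum $v_1$ lies in $S_1$, so you examine $v_2$ and find only one backward neighbor ($v_1$). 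Both one-sided counts give $1<k=2$, yet the pair is fine because $v_1$ has both of its neighbors $v_2,v_3$ outside $S_1$ --- the witness is a vertex your procedure never inspects, and it needs its neighbors counted on both sides. Closing the boundary case therefore requires a different idea (a two-sided count at an extreme vertex, typically combined with a cascading argument showing that a set failing $k$-reachability must keep absorbing vertices until it runs off the end of the platoon), not just careful indexing.

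More seriously, the boundary regime is exactly where the statement as written is false, so it cannot be ``dispatched by hand'' under the paper's hypothesis $n\ge k$. For $\mathcal{P}(4,3)\cong K_4$, the partition $S_1=\{v_1,v_2\}$, $S_2=\{v_3,v_4\}$ gives every vertex exactly two neighbors outside its own set, so the graph is only $2$-robust; likewise $\mathcal{P}(6,4)$ with $S_1=\{v_1,v_2,v_3\}$, $S_2=\{v_4,v_5,v_6\}$ is not $4$-robust. A correct proof needs an additional assumption of roughly $n\ge 2k$, which the intended platoon regime satisfies but the lemma does not require. For comparison, the paper's own proof is weaker than yours: it merely asserts that the minimizing configuration is the trailing vehicle of the platoon, with no argument and no treatment of either issue above.
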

 \begin{proof}
  From Definition \ref{def:robustnessr}, by choosing every two disjoint sets of vertices in $\mathcal{P}(n,k)$, we see that the minimum number $r$ for which a subset is $r$-reachable is $r=d_{\rm min}$ and its corresponding subset is the ending node in the platoon, as shown in subset $A$ in Fig. \ref{fig:etdvnj}.
 \end{proof}

\begin{figure}[t!]
\centering
\includegraphics[scale=.45]{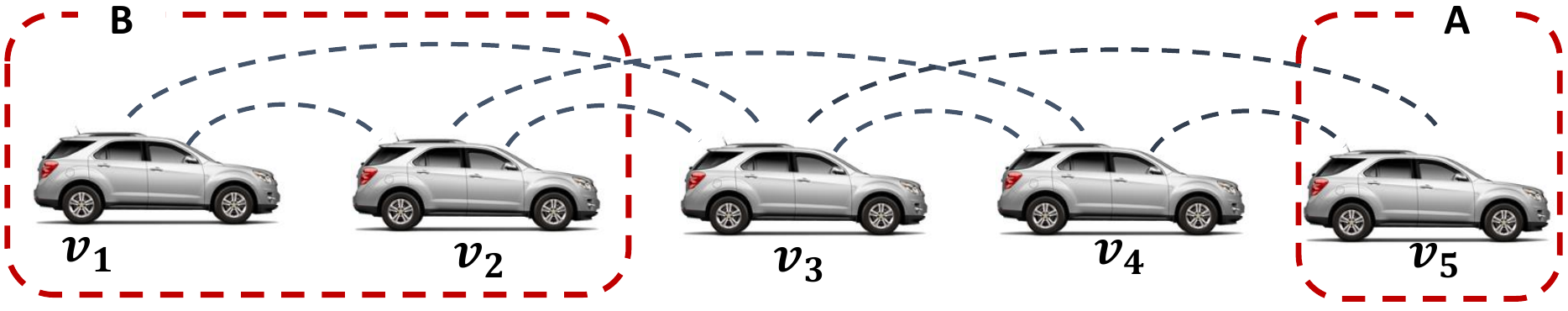}
\caption{Node Selection for calculating robustness (set $A$)  and isoperimetric constant (set $B$) in $\mathcal{P}(n,k)$.}
\label{fig:etdvnj}
\end{figure}

\subsection{Algebraic Connectivity and Network Expansion}

One can further extend inequalities \eqref{eqn:veredgemin} to include the algebraic connectivity and obtain \cite{Fiedler}
$$\lambda_2(L)\leq \kappa(\mathcal{G}) \leq e(\mathcal{G}) \leq d_{\rm min}(\mathcal{G}).$$
Based on the above inequalities, we conclude that the algebraic connectivity of $\mathcal{P}(n,k)$ is less than $k$. 
However, by using the notion of {\it network expansion}, some tighter bounds on the algebraic connectivity can be obtained.

\begin{definition}[\textbf{Expander Graph}]
Expander graphs are graph sequences for which each graph in the sequence has an expansion property, meaning that there exists $\gamma>0$ (independent of $n$) such that each subset S of nodes with size $|S| \leq  \frac{n}{2}$ has at least $\gamma|S|$ edges to the rest of the network. In particular, we say that the graph $\mathcal{G}$ is a $\gamma$-expander network if $i(\mathcal{G})= \gamma$ for some $\gamma>0$, where $i(\mathcal{G})$ is the isoperimetric constant defined in Section \ref{sec:deffin}. 
\end{definition}

Expander graphs have diverse applications in computer science and mathematics \cite{Hoory}. The algebraic connectivity of the graph is related to the network expansion (or the isoperimetric constant) by the following bounds \cite{ChungSpectral}
 \begin{equation}
\frac{i(\mathcal{G})^2}{2d_{\rm max}} \le \lambda_2(\mathcal{G})  \le 2 i(\mathcal{G}).
\label{eqn:lower_bound_lambda_2_iso}
\end{equation}
Using these bounds, we present the following proposition. 
 
\begin{proposition}
Given a $k$-nearest neighbor platoon $\mathcal{P}(n,k)$ its algebraic connectivity is bounded by
\begin{equation}\label{eqn:twolowerbounds}
\max\left\{2k-n+2, \frac{k(k+1)^2}{16\bar{n}^2}\right\} \leq \lambda_2(L) \leq \frac{2k(k+1)}{\bar{n}},
\end{equation}
where $\bar{n}=\lfloor \frac{n}{2} \rfloor$.
\label{prop:algconn}
\end{proposition}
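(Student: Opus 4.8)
The plan is to establish the three estimates in \eqref{eqn:twolowerbounds} one at a time, in each case reducing $\lambda_2(L)$ to an elementary combinatorial count; the structural fact used throughout is that $\mathcal{P}(n,k)$ is the $k$-th power of the path $P_n$, that is, $v_i\sim v_j$ precisely when $1\le|i-j|\le k$, so that $d_{\rm max}\le 2k$. For the upper bound I would apply the right-hand side of \eqref{eqn:lower_bound_lambda_2_iso}, $\lambda_2(L)\le 2\,i(\mathcal{G})$, to the test set $S=\{v_1,\dots,v_{\bar n}\}$, which is admissible in \eqref{eqn:iso} because $\bar n\le n/2$. An edge belongs to $\partial S$ iff it has the form $(v_a,v_b)$ with $a\le\bar n<b$ and $b-a\le k$; organising the count by the length $d=b-a$ shows that at most $d$ such edges occur for each $d\in\{1,\dots,k\}$, so $|\partial S|\le\sum_{d=1}^{k}d=\tfrac12 k(k+1)$, giving $i(\mathcal{G})\le \tfrac{k(k+1)}{2\bar n}$ and hence $\lambda_2(L)\le \tfrac{k(k+1)}{\bar n}\le \tfrac{2k(k+1)}{\bar n}$.

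For the lower bound $\tfrac{k(k+1)^2}{16\bar n^2}$ I would use the left-hand side of \eqref{eqn:lower_bound_lambda_2_iso}, which together with $d_{\rm max}\le 2k$ gives $\lambda_2(L)\ge \tfrac{i(\mathcal{G})^2}{2d_{\rm max}}\ge \tfrac{i(\mathcal{G})^2}{4k}$; this reduces the task to the matching isoperimetric bound $i(\mathcal{G})\ge \tfrac{k(k+1)}{2\bar n}$, since substituting it yields $\lambda_2(L)\ge \tfrac{1}{4k}\big(\tfrac{k(k+1)}{2\bar n}\big)^2=\tfrac{k(k+1)^2}{16\bar n^2}$. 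To obtain $i(\mathcal{G})\ge \tfrac{k(k+1)}{2\bar n}$ I would first show, by a compression (left-shift) argument, that among all vertex subsets of a prescribed size $s$ a contiguous prefix $\{v_1,\dots,v_s\}$ minimises $|\partial S|$; a direct count on a prefix then gives $|\partial S|=\tfrac12 s(2k-s+1)$ for $s\le k$ and $|\partial S|=\tfrac12 k(k+1)$ for $k\le s\le\bar n$, and in either case $|\partial S|/s\ge \tfrac{k(k+1)}{2\bar n}$ (using $\bar n\ge k$ in the first), so the minimum in \eqref{eqn:iso} equals $\tfrac{k(k+1)}{2\bar n}$, attained by the prefix of size $\bar n$. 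I expect this compression step to be the main obstacle: when $k\ge 2$ a single adjacent transposition need not decrease $|\partial S|$, since it can alter the length-$k$ boundary edges at the two extremities of the affected range, so one must either invoke the known edge-isoperimetric inequality for powers of paths or replace the elementary swap by a more global rearrangement, and separately handle the small-$n$ corner cases where $\bar n<k$.

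For the lower bound $2k-n+2$ I would work with the complementary graph $\mathcal{P}'$ inside $K_n$, where $v_i\sim v_j$ iff $|i-j|\ge k+1$, and use $L(K_n)=L(\mathcal{P}(n,k))+L(\mathcal{P}')$. Restricting to $\mathbf 1^{\perp}$, on which $L(K_n)$ acts as $nI$, gives the identity $\lambda_2\big(L(\mathcal{P}(n,k))\big)=n-\lambda_{\rm max}\big(L(\mathcal{P}')\big)$ (using that $\mathcal{P}(n,k)$ is connected, Lemma~\ref{lem:kconnectedprop}). Since $\mathcal{P}'$ has maximum degree $n-1-k$, attained at an endpoint, and $\lambda_{\rm max}(L(\mathcal{P}'))\le 2\,d_{\rm max}(\mathcal{P}')$, this gives $\lambda_2(L)\ge n-2(n-1-k)=2k-n+2$, which is informative only for $n\le 2k+1$, precisely the near-complete regime in which the prefix count above has not yet saturated.
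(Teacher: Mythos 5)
Your route is essentially the paper's: both bounds involving $\bar n$ come from the Cheeger-type inequalities \eqref{eqn:lower_bound_lambda_2_iso} applied with the prefix $S=\{v_1,\dots,v_{\bar n}\}$ and $d_{\rm max}\le 2k$, and the bound $2k-n+2$ is Fiedler's inequality $\lambda_2(L)\ge 2d_{\rm min}-n+2$ with $d_{\rm min}=k$. The paper simply cites Fiedler for the latter, whereas your complement-graph argument (writing $L(K_n)$ as the sum of the two Laplacians, restricting to $\mathbf{1}^{\perp}$, and using $\lambda_{\rm max}(L(\mathcal{P}'))\le 2d_{\rm max}(\mathcal{P}')=2(n-1-k)$) is a correct, self-contained re-derivation of that citation. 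Your boundary count also yields the slightly sharper upper bound $k(k+1)/\bar n$ before relaxing to the stated $2k(k+1)/\bar n$; the paper's own substitution of $i(\mathcal{G})=k(k+1)/(2\bar n)$ into \eqref{eqn:lower_bound_lambda_2_iso} carries the same factor-of-two slack. The step you flag as the obstacle --- proving $i(\mathcal{G})\ge k(k+1)/(2\bar n)$, i.e.\ that a contiguous prefix minimizes $|\partial S|/|S|$ in \eqref{eqn:iso} --- is indeed the only part that matters for the lower bound $k(k+1)^2/(16\bar n^2)$, since a test set only gives an \emph{upper} bound on $i(\mathcal{G})$. You are not missing an argument the paper supplies: the paper asserts prefix optimality in a single sentence without proof. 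Closing this does require an edge-isoperimetric statement for powers of paths (or the global rearrangement you suggest; you are right that a single adjacent swap need not be monotone for $k\ge 2$), and your prefix counts $\tfrac12 s(2k-s+1)$ for $s\le k$ and $\tfrac12 k(k+1)$ for $k\le s\le\bar n$ correctly identify the value the minimum must take, as well as the corner case $\bar n<k$ that both you and the paper would need to treat separately.
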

 
\begin{proof}
First we use bounds given in \eqref{eqn:lower_bound_lambda_2_iso}. For this, we should calculate the isoperimetric constant in $\mathcal{P}(n,k)$ by finding a set in $\mathcal{P}(n,k)$ which minimizes $\frac{|\partial S|}{|S|}$ with $|S|\leq \frac{n}{2}$. A set which contains $\lfloor \frac{n}{2} \rfloor$ nodes, minimizes this function (Fig. \ref{fig:etdvnj}, set B). Hence, the isoperimetric constant will be $i(\mathcal{G})=\frac{1+2+...+k}{\lfloor \frac{n}{2} \rfloor}=\frac{k(k+1)}{2\lfloor \frac{n}{2} \rfloor}$. Substituting this value into \eqref{eqn:lower_bound_lambda_2_iso} and considering the fact that $d_{\rm max}\leq 2k$ provides the upper bound and the lower bound $\frac{k(k+1)^2}{16\bar{n}^2}$. The second lower bound comes from bound $2d_{\rm min}-n+2\leq \lambda_2(L)$ proposed in \cite{Fiedler} and considering the fact that $d_{\rm min}=k$. 
\end{proof}
The maximum over two lower bounds in \eqref{eqn:twolowerbounds} is due to the fact that for certain values of $k$ one of the lower bounds is tighter than the other. For instance, for $k\leq \frac{n-2}{2}$ the left lower bound is zero or negative and the right lower bound is tighter. However, for $k=n-1$ the left lower bound is tighter.

\begin{remark}[Comment on Mobile Networks]
For applications to wireless mobile networks, due to the mobility of agents in the network, the $\mathcal{P}(n,k)$ structure can change and some edges are added or removed. However, since all of the network connectivity measures discussed above are monotonic functions of the edge addition \cite{Ebiautomatica}, it is sufficient for the mobile network to preserve a minimum local connectivity $k$ to satisfy the all desired global connectivity measures. 
\end{remark}

\section{Distributed  Estimation and Control Algorithms}\label{sec:disest}

In this section, three estimation and control policies for vehicle platoons will be studied, and we will show how the connectivity measures introduced in Section \ref{sec:nettheprop} can be directly applied to quantify the performance of these algorithms. For each algorithm,  we will see that one of the network connectivity measures introduced in the previous section determines how much the algorithm is resilient to the effect of communication failures or disturbances.

\subsection{Distributed Estimation Robust to Communication Failures and Drops}

 Distributed estimation (or calculation) is a procedure by which vehicles in a network may estimate unavailable quantities based on incomplete localized measurements and cooperation with nearby vehicles.  Distributed estimation can potentially have diverse applications in vehicle networks, such as fault detection or prediction, as schematically shown in the upper box in Fig. \ref{fig:sstmgvnj}.
 
 \smallskip

The state of vehicle $v_j$, which can be its kinematic state, e.g., velocity, or some spatial parameter, e.g., road condition, is denoted simply by the scalar $x_j[0]$. The objective is to enable  vehicle $v_i$ in the network (which is not in the communication range of vehicle $v_j$) to calculate this value. To yield this, vehicle $v_i$ performs a linear iterative policy using the following  time invariant updating rule

\begin{equation}
x_i[k+1]=w_{ii}x_i[k]+\sum_{j\in \mathcal{N}_i}w_{ij}x_j[k]\,,
\label{eqn:1}
\end{equation}
where $w_{ii},w_{ij}>0$ are predefined weights. In addition to dynamics \eqref{eqn:1}, at each time step, vehicle $v_i$ has access to its own value (state) and the values of its neighbors. Hence, the vector of measurements for $v_i$ is defined as
\begin{equation}
y_i[k]=\mathcal{C}_i\boldsymbol{x}[k],
\label{eqn:measu}
\end{equation}
where $\mathcal{C}_i$ is a $(d_i +1)\times n$ matrix with a single 1 in each row that denotes the positions of the state-vector $\boldsymbol{x}[k]$ available to vehicle $v_i$ (i.e., these positions correspond to vehicles that are neighbors of $v_i$, along with vehicle  $v_i$ itself).

\begin{remark}\textbf{(Cyber-Physical Representation)}:
Fig. \ref{fig:sstmgvnj} provides a cyber-physical interpretation of the distributed estimation algorithm. According to this figure, algorithm \eqref{eqn:1} is developed in the cyber layer, which receives the physical states of vehicles from the physical layer as initial conditions for its algorithm (red dashed lines), perform the distributed estimation to obtain the initial states of all vehicles in the network, and finally returns those initial states  back to the physical layer (orange dashed lines). It should be noted that state $x_i[k]$ in \eqref{eqn:1} evolves in the cyber layer and it does not represent the evolution of vehicle's physical state based on the communication; the dynamics \eqref{eqn:1} is only used for implementing a distributed calculation algorithm. Here, it is only $\mathbf{x}[0]=[x_1[0], x_2[0], ..., x_n[0]]^T$ that reflects the physical states of the vehicles. 
   
\begin{figure}[t!] 
\centering
\includegraphics[scale=.42]{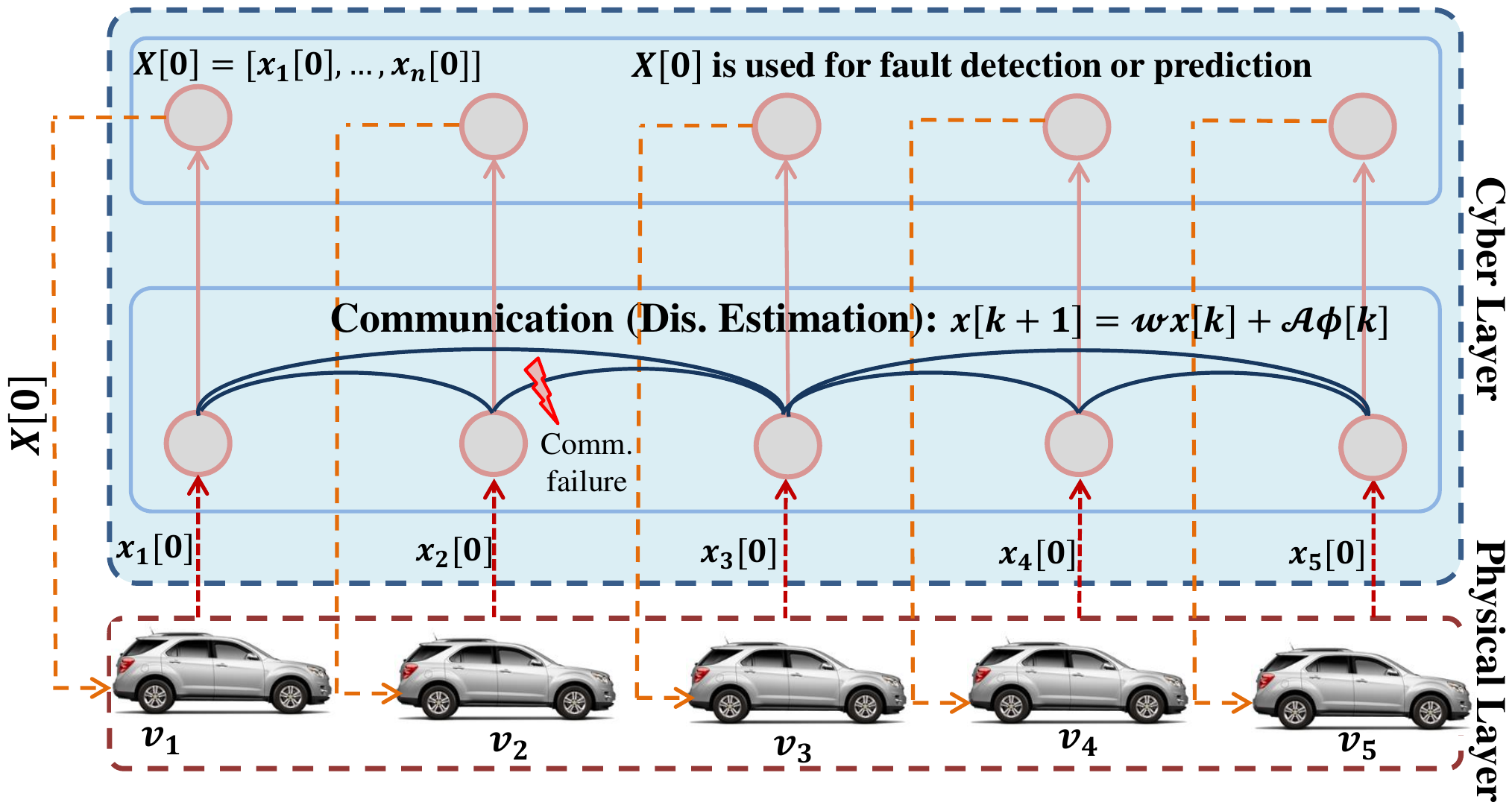}
\caption{Cyber-physical representation of the distributed estimation algorithm.}
\label{fig:sstmgvnj}
\end{figure}
\end{remark}

For such distributed estimation algorithms, we consider the possibility that there may exist some vehicles which fail to disseminate their information in a correct way, and some robust distributed estimation algorithms have been proposed to overcome such communication failures \cite{Sundaram2011, Sundaram}. More formally, suppose that some vehicles do not precisely follow \eqref{eqn:1} to update their value. In particular, at time step $k$, suppose vehicle $v_i$'s update rule deviates from the predefined policy \eqref{eqn:1} and (likely, unintentionally) adds an arbitrary value $\phi_i[k]$ to its updating policy.\footnote{In the literature such agents are called {\it adversarial} or {\it malicious} agents.} In this case, the updating rule \eqref{eqn:1} will become 
\begin{equation}
x_i[k+1]=w_{ii}x_i[k]+\sum_{j\in \mathcal{N}_i}w_{ij}x_j[k]+\phi_i[k],
\label{eqn:1wd}
\end{equation}
and if there are $f > 0$ of these faulty vehicles, \eqref{eqn:1wd} in vector form  becomes 
\begin{equation}
\boldsymbol{x}[k+1]=\mathcal{W} \boldsymbol{x}[k]+\underbrace{[\mathbf{e}_1 \hspace{3mm} \mathbf{e}_2 \hspace{3mm} ... \hspace{3mm} \mathbf{e}_f]}_{\mathcal{A}}\boldsymbol{\phi}[k],
\label{eqn:f2}
\end{equation}
where $\boldsymbol{x} = (x_1,\ldots,x_n)^{\sf T}$, $\mathcal{W} \in \mathbb{R}^{n\times n}$ is the matrix of communication weights $w_{ij}$, $\boldsymbol{\phi}[k]=[\phi_1[k], \phi_2[k], ..., \phi_f[k]]^{\sf T}$ and $\mathbf{e}_i$  denotes the $i$th unit vector of $\mathbb{R}^n$. The set of faulty vehicles in \eqref{eqn:f2} is unknown and consequently the matrix $\mathcal{A}$ is  unknown. However, each vehicle knows an upper bound for the number of faulty vehicles.

\begin{remark}[\textbf{Packet losses}]\label{Rem:PacketLoss}
The distributed calculation algorithm in the presence of vehicle communication fault analyzed in this paper contains the scenario which a vehicle stops receiving signal from its neighbors. This is the well-known notion called {\it  signal packet drop} which is studied in the communication literature \cite{Hajitouri, Sieler, Hespsurvey}. More formally, in \eqref{eqn:1wd} if we set $\phi_i[k]= -\sum_{j\in \mathcal{N}_i}w_{ij}x_j[k]$, it becomes equivalent to the case where $v_i$ does not receive the data from its neighbors. Since the analysis in this paper does not depend on the value of $\phi_i[k]$, the packet dropping scenario can be straightforwardly included in the robust distributed calculation analysis.
\end{remark}

The following theorem provides a condition which ensures that each vehicle is able to determine the (initial) states of all other vehicles in the network, despite of the action of some faulty vehicles. The details of the estimator design (which is in the form of an unknown input observer) is not discussed in this paper and we refer the reader to \cite{Sundaram2011}.

\begin{theorem}[\cite{Sundaram2011}]
Let  $\mathcal{G}(\mathcal{V},\mathcal{E})$ be a fixed graph and let $f$ denote the maximum number of faulty vehicles that are to be tolerated in the network. Then, regardless of the actions of the faulty vehicles, $v_i$ can uniquely determine all of the initial values of linear iterative strategy \eqref{eqn:f2} for almost\footnote{The {\it{almost}} in Theorem \ref{thm:robdeist} is due to the fact that  the set of  parameters for which the system is not observable has Lebesgue measure zero \cite{Reinschke}.} any choice of weights in the matrix $\mathcal{W}$  if $\mathcal{G}$ is at least  $(2f +1)$-vertex connected.
\label{thm:robdeist}
\end{theorem}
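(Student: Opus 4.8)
The plan is to read Theorem~\ref{thm:robdeist} as a strong (unknown-input) observability statement and to reduce it, via a genericity argument, to the purely graph-theoretic hypothesis of $(2f+1)$-vertex-connectivity. First I would exploit the linearity of \eqref{eqn:f2}: node $v_i$ fails to uniquely recover $\boldsymbol{x}[0]$ exactly when there are two admissible scenarios — two initial conditions together with two fault signals, each supported on a set of at most $f$ vehicles — producing the same output sequence $y_i[\cdot]$ in \eqref{eqn:measu}. Subtracting the two scenarios, this is equivalent to the existence of a nonzero \emph{difference trajectory} $\boldsymbol{z}[k+1]=\mathcal{W}\boldsymbol{z}[k]+\mathcal{A}_F\boldsymbol{\psi}[k]$, where $\mathcal{A}_F$ is the $n\times|F|$ matrix of unit vectors indexed by $F\subseteq\mathcal{V}$ with $|F|\le 2f$ (the union of the two candidate fault sets), such that $\mathcal{C}_i\boldsymbol{z}[k]=0$ for all $k\ge0$. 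Hence $v_i$ can recover $\boldsymbol{x}[0]$ against every fault set of size at most $f$ if and only if, for every $F$ with $|F|\le 2f$, the triple $(\mathcal{W},\mathcal{A}_F,\mathcal{C}_i)$ is strongly observable, i.e.\ admits no nonzero state trajectory invisible at $v_i$ for some input.

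Next I would invoke the Rosenbrock-pencil characterization of strong observability: $(\mathcal{W},\mathcal{A}_F,\mathcal{C}_i)$ is strongly observable if and only if
\begin{equation}
\operatorname{rank}\begin{bmatrix}\mathcal{W}-\lambda I & \mathcal{A}_F\\ \mathcal{C}_i & 0\end{bmatrix}=n+|F|\qquad\text{for all }\lambda\in\mathbb{C}.
\end{equation}
I would then pass to a structural viewpoint: the sparsity pattern of $\mathcal{W}$ is fixed by $\mathcal{G}$, and, provided the \emph{generic} (normal) rank of this pencil equals $n+|F|$, the set of weight matrices compatible with $\mathcal{G}$ for which the pencil drops rank at some $\lambda\in\mathbb{C}$ is contained in the zero set of a nontrivial polynomial in the entries of $\mathcal{W}$. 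Since there are only finitely many sets $F$ with $|F|\le 2f$, the union of the corresponding ``bad'' weight sets is still a finite union of proper algebraic varieties, hence Lebesgue-null — this accounts for the word \emph{almost} in the statement. Thus the theorem reduces to showing that $(2f+1)$-vertex-connectivity forces the pencil to have full normal rank $n+|F|$ for every such $F$.

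Finally I would cash in the hypothesis via Menger's theorem. Because $\kappa(\mathcal{G})\ge 2f+1$, between any vehicle $v_j$ and $v_i$ there exist at least $2f+1$ internally vertex-disjoint paths, so deleting the at most $2f$ ``input nodes'' in $F$ still leaves an intact path from every node to the observed set $\{v_i\}\cup\mathcal{N}_i$. Translating this surplus of disjoint paths into a matching/linking condition on the bipartite graph associated with the pencil — each node can route its contribution to the output through a path untouched by $F$ — yields the required full structural normal rank, uniformly over all $F$ with $|F|\le 2f$. Combining the three steps gives the claim. The step I expect to be the main obstacle is the second one: making rigorous the passage from \emph{full normal rank of the pencil} to \emph{absence of invariant zeros for almost every admissible weight matrix}, and checking that the exceptional set stays measure-zero after taking the union of the requirements over all admissible fault supports $F$ (and, implicitly, over all source nodes $v_j$ whose initial value must be reconstructed).
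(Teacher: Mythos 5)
This theorem is imported by citation: the paper gives no proof of it and explicitly defers the unknown-input-observer construction to \cite{Sundaram2011}, so there is no in-paper argument to compare against. Your sketch reconstructs essentially the route of that reference — reduce unique recovery under $f$ faults to strong observability of $(\mathcal{W},\mathcal{A}_F,\mathcal{C}_i)$ over all supports $|F|\le 2f$, characterize it by the Rosenbrock pencil having rank $n+|F|$ for all $\lambda$, argue genericity over the weights compatible with $\mathcal{G}$, and supply the structural rank condition from the $2f+1$ vertex-disjoint paths guaranteed by Menger's theorem — and the outline is sound. The one place where your plan is only gestured at is the one you yourself flag: converting the disjoint-path surplus into generic absence of invariant zeros (not merely full \emph{normal} rank at generic $\lambda$) is the actual technical content of the cited work, resting on structured-systems results relating linkings to the generic rank of the pencil and on ruling out structurally fixed zeros; a complete proof would need those lemmas spelled out, but the finite union over fault supports $F$ indeed keeps the exceptional weight set Lebesgue-null as you claim.
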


 Theorem \ref{thm:robdeist} provides a sufficient condition for each vehicle to be able to robustly distributedly estimate the initial states of other vehicles in the network. Theorem \ref{thm:robdeist} together with Lemma \ref{lem:kconnectedprop} yield the following theorem which shows the ability of $\mathcal{P}(n,k)$ in performing distributed estimation algorithms. 

\smallskip
 
\begin{theorem}
For a $k$-nearest neighbor platoon $\mathcal{P}(n,k)$, regardless of the actions of up to $\lfloor \frac{k-1}{2}\rfloor$ faulty vehicles, each vehicle can uniquely determine all of the initial values in the network via linear iterative strategy \eqref{eqn:f2} for almost any choice of weights in the matrix $\mathcal{W}$. 
\label{thm:robdeisdst}
\end{theorem}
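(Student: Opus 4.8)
## Proof Proposal

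The plan is to combine Theorem~\ref{thm:robdeist} with Lemma~\ref{lem:kconnectedprop} directly, treating the result as an immediate corollary. The only work is to reconcile the connectivity requirement of Theorem~\ref{thm:robdeist} with the exact connectivity of $\mathcal{P}(n,k)$ computed in Lemma~\ref{lem:kconnectedprop}, and to solve the resulting inequality for the admissible number of faulty vehicles $f$.

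First I would recall from Lemma~\ref{lem:kconnectedprop} that $\mathcal{P}(n,k)$ is exactly $k$-vertex-connected, i.e. $\kappa(\mathcal{G})=k$. Next, Theorem~\ref{thm:robdeist} guarantees that each vehicle $v_i$ can uniquely recover all initial values of the linear iteration \eqref{eqn:f2}, for almost any weight matrix $\mathcal{W}$ and regardless of the behavior of the faulty vehicles, provided the communication graph is at least $(2f+1)$-vertex-connected. Applying this with $\mathcal{G}=\mathcal{P}(n,k)$, the hypothesis of Theorem~\ref{thm:robdeist} is satisfied as long as $2f+1 \le k$, equivalently $f \le \frac{k-1}{2}$. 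Since $f$ must be a nonnegative integer, the largest admissible value is $f = \lfloor \frac{k-1}{2} \rfloor$, which is exactly the bound claimed in the statement. This completes the argument.

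The step that requires a moment of care — and the only place where anything could go wrong — is the direction of the inequality: one must check that $\mathcal{P}(n,k)$ being \emph{exactly} $k$-connected (not merely \emph{at least} $k$-connected) still suffices, which it does, because Theorem~\ref{thm:robdeist} only asks for a lower bound on connectivity, and a graph that is $k$-connected is a fortiori $(2f+1)$-connected whenever $2f+1 \le k$. There is no genuine obstacle here; the result is a direct specialization. If one wished to be thorough, I would also note that the hypothesis $n \ge k$ (so that $\mathcal{P}(n,k)$ is well-defined) together with $k \ge 1$ ensures $\lfloor \frac{k-1}{2}\rfloor \ge 0$, so the statement is non-vacuous, and for $k=1,2$ it simply says no faulty vehicles can be tolerated, consistent with the fact that $(2\cdot 0 + 1)$-connectivity is the minimum needed even with $f=0$.
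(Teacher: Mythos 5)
Your argument is correct and is exactly the paper's route: the paper presents this theorem as an immediate consequence of Theorem~\ref{thm:robdeist} combined with Lemma~\ref{lem:kconnectedprop}, with the bound $\lfloor \frac{k-1}{2}\rfloor$ obtained by solving $2f+1 \le k$ for integer $f$, just as you do. Your additional checks (that exact $k$-connectivity suffices and that the bound is non-vacuous) are fine but not needed.
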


\smallskip

 Fig. \ref{fig:sstss547vnj} illustrates via an example how Theorem \ref{thm:robdeisdst} provides network-theoretic sufficient condition for distributed estimation on $\mathcal{P}(n,k)$. In this example, there exists a single faulty vehicle in a network of 10 vehicles. Based on Theorem  \ref{thm:robdeisdst}, it is sufficient to have $\mathcal{P}(10,3)$ to overcome the action of the faulty vehicle; the corresponding trace in Fig \ref{fig:sstss547vnj} shows that the Euclidean norm of the  error of the estimated initial states of the vehicles in the network observed by a single vehicle goes to zero. More formally, if the true initial values are denoted by vector $\mathbf{x}[0]$ and the estimation (calculation) of these initial values by each vehicle at time step $k$ is $\hat{\mathbf{x}}[k]$, then Fig. \ref{fig:sstss547vnj} shows the Euclidean norm of the error vector $\mathbf{e}[k]=\hat{\mathbf{x}}[k]-\mathbf{x}[0]$. However, note that $\mathcal{P}(10,2)$ can also perform the distributed estimation algorithm well; this illustrates that the connectivity condition is sufficient, but not necessary.

\begin{figure}[t!]
\centering
\includegraphics[scale=.55]{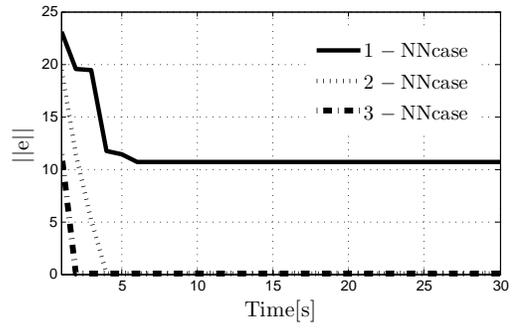}
\caption{Distributed estimation error for a vehicle in 1, 2 and 3 nearest-neighbour platoons of 10 vehicles.}
\label{fig:sstss547vnj}
\end{figure}

\subsection{Distributed Consensus, Robust to Communication Faults}

In the distributed consensus scenario, the network of connected vehicles tries to reach to a consensus value, e.g., velocity or road condition, despite the existence of some faults, biases, or signal drops in inter-vehicle communications.  In order to overcome the actions of faulty vehicles, the following iteration policy, called Weighted-Mean-Subsequence-Reduced (W-MSR) \cite{ShreyasHogan}, is proposed to overcome their actions.
\begin{definition}[\textbf{W-MSR Algorithm}\cite{ShreyasHogan}]
For some non-negative integer $f$, at each time-step, each node knows the number of faulty vehicles (or at least an upper bound of that) and disregards the largest and smallest $f$ values in its neighborhood ($2f$ in total) and updates its state to be a weighted average of the remaining values. More formally, this yields 
\begin{align}
{x}_{j}[k+1] = w_{jj}{x}_{j}[k]+ \sum_{p\in \mathcal{N}_j[k]} w_{jp}{x}_{p}[k].
\label{eqn:partiasdal}
\end{align}
where $\mathcal{N}_j[k]$ is the set of vehicles which are the neighbors of vehicle $j$ and are not ignored. 
\end{definition}

In particular, if there exist $f$ faulty vehicles, the dynamics is similar to \eqref{eqn:f2}, except the following two  additional restrictions on matrix $\mathcal{W}$:
\begin{enumerate}
    \item[(i)] $w_{jp}>0$, \quad  $\forall p\in \mathcal{N}_j[k]\cup \{v_j\}, v_j \in \mathcal{V}$,
     \item[(ii)]  $\sum_{p\in \mathcal{N}_j[k] \cup \{v_j\}} w_{jp}=1, \quad \forall v_j \in \mathcal{V}$.
\end{enumerate}
\begin{remark}
Unlike the distributed estimation algorithm that recovering $\boldsymbol{x}[0]$ was the final goal, in the distributed consensus, the final state of the communication dynamics, i.e., $\lim_{k\to \infty}x_i[k]$, is important as it determines the consensus value. Hence, our approach here is to design a  consensus algorithm which is resilient to faulty (or malicious) vehicles. 
\end{remark}

Similar to the case of distributed estimation mentioned in subsection A, the underlying network has to satisfy a certain level of connectivity to ensure that consensus can be achieved. However, compared to the distributed estimation, distributed consensus requires  {\it $r$-robustness} which a stronger notion of network connectivity as discussed in the previous section. The following theorem provides a sufficient condition for the {iteration} \eqref{eqn:partiasdal} to reach to a consensus despite of the actions of faulty vehicles in the network. Before that, we present the definition of $f$-local set to ensure that the number of faulty vehicles in the network does not increase during the operation of the consensus dynamics. 
\begin{definition}[\textbf{$f$-local set}]
A set $\mathcal{S}\subset \mathcal{V}$ is $f$-local if it
contains at most $f$ nodes in the neighborhood of the other
nodes for all $t$, i.e., $|\mathcal{N}_i[t]\cap \mathcal{S}| \leq  f$, $\forall i \in \mathcal{V}\setminus \mathcal{S}, \forall t \in \mathbb{Z}_{\geq 0},
f \in \mathbb{Z}_{\geq 0}$.
\end{definition}

\begin{theorem}[\cite{Leblank}]
Suppose faulty vehicles form an $f$-local set. Then resilient asymptotic consensus is reached under the W-MSR iteration if the network is $(2f + 1)$-robust.
\label{thm:shreyashogantheoremmaster}
\end{theorem}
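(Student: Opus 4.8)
The plan is to reconstruct the convergence argument for the W-MSR dynamics from \cite{Leblank}, which rests on two ingredients: the extremal values taken by the \emph{normal} (non-faulty) vehicles evolve monotonically, and $(2f+1)$-robustness forces the gap between these extremal values to vanish. Write $\mathcal{N}\subseteq\mathcal{V}$ for the set of normal vehicles and set $M[t]=\max_{v_i\in\mathcal{N}}x_i[t]$ and $m[t]=\min_{v_i\in\mathcal{N}}x_i[t]$, the largest and smallest normal values at step $t$.

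First I would establish $M[t+1]\le M[t]$ and $m[t+1]\ge m[t]$. Fix a normal vehicle $v_j$. Because the faulty set is $f$-local, $v_j$ has at most $f$ faulty neighbors at step $t$. After $v_j$ discards the $f$ largest and $f$ smallest values in $\mathcal{N}_j[t]$, any surviving value $x_p[t]>M[t]$ must be faulty, and then all $f$ discarded large values also exceed $M[t]$ and are faulty, giving $v_j$ at least $f+1$ faulty neighbors — a contradiction. Hence every value retained by $v_j$, together with $x_j[t]$ itself, lies in $[m[t],M[t]]$, and by the restrictions (i)--(ii) on $\mathcal{W}$ the update $x_j[t+1]$ is a convex combination of these values, so $x_j[t+1]\in[m[t],M[t]]$. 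Taking max/min over $v_j\in\mathcal{N}$ gives the claim, so $M[t]\downarrow A_M$ and $m[t]\uparrow A_m$ with $A_m\le A_M$, and every normal trajectory stays in $[m[0],M[0]]$.

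The decisive step is to show $A_M=A_m$ using $(2f+1)$-robustness; this is the part needing the most care. Suppose instead $A_M>A_m$ and pick $\epsilon_0>0$ with $A_M-\epsilon_0>A_m+\epsilon_0$. For a threshold $\epsilon<\epsilon_0$ and time $t$ define the disjoint normal sets $\mathcal{X}_M(t,\epsilon)=\{v_i\in\mathcal{N}:x_i[t]>A_M-\epsilon\}$ and $\mathcal{X}_m(t,\epsilon)=\{v_i\in\mathcal{N}:x_i[t]<A_m+\epsilon\}$, both nonempty for $t$ large since $M[t]\to A_M$ and $m[t]\to A_m$. Applying $(2f+1)$-robustness of $\mathcal{G}$ to this pair yields a vehicle — say in $\mathcal{X}_M(t,\epsilon)$ — with at least $2f+1$ neighbors outside $\mathcal{X}_M(t,\epsilon)$; after removing its $2f$ filtered values it still retains at least one neighbor whose value is at most $A_M-\epsilon$, and combining this with a uniform lower bound on the weights forces its next value below $A_M-\epsilon'$ for a strictly smaller $\epsilon'$. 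Iterating over a suitable decreasing sequence $\epsilon_0>\epsilon_1>\cdots$ with matching time indices, and using that $|\mathcal{N}|$ is finite, one of $\mathcal{X}_M(\cdot,\epsilon_\ell)$ or $\mathcal{X}_m(\cdot,\epsilon_\ell)$ becomes empty in finitely many steps, contradicting $M[t]\to A_M$ (resp.\ $m[t]\to A_m$). Hence $A_M=A_m=:x^\star$, all normal vehicles converge to $x^\star\in[m[0],M[0]]$, which is exactly resilient asymptotic consensus.

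The main obstacle is the bookkeeping in the last paragraph: choosing the $\epsilon_\ell$-sequence and time indices so that the near-extreme normal sets provably shrink, and handling the case split between $\mathcal{X}_M$ and $\mathcal{X}_m$ produced by robustness at each step; the monotonicity and filtering arguments are routine by comparison. Since the statement is quoted verbatim from \cite{Leblank}, an acceptable alternative is to cite that reference for the full argument and proceed directly to specializing it to $\mathcal{P}(n,k)$ via Lemma \ref{lem:rocbju}.
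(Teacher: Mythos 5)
The paper offers no proof of this statement: it is imported verbatim from \cite{Leblank} and used as a black box (its only role here is to be specialized to $\mathcal{P}(n,k)$ via Lemma \ref{lem:rocbju}), so there is no in-paper argument to compare against. Your sketch is a faithful reconstruction of the standard argument in that reference --- monotonicity of the normal extremes $M[t]$, $m[t]$ via the $f$-local counting argument, followed by a contradiction in which $(2f+1)$-robustness of the pair of near-extremal sets forces the gap $A_M - A_m$ to zero --- and the first half is airtight as written. The one place that needs repair is the counting in the second half: you define $\mathcal{X}_M(t,\epsilon)$ and $\mathcal{X}_m(t,\epsilon)$ as subsets of the \emph{normal} nodes, but then a neighbor ``outside $\mathcal{X}_M$'' need not have value at most $A_M-\epsilon$ (it may be a faulty node with an arbitrarily large value), so ``$2f+1$ outside minus $2f$ filtered leaves one good value'' does not follow directly. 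You must either define the sets over all of $\mathcal{V}$ (as \cite{Leblank} does), in which case membership itself bounds the values but you must then argue the $(2f+1)$-reachable node can be taken normal, or keep your normal-only definition and invoke $f$-locality a second time: at most $f$ of the $2f+1$ outside-neighbors are faulty, so at least $f+1$ are normal with values at most $A_M-\epsilon$, and a sorted-position argument shows at least one such value survives the removal of the top $f$ and bottom $f$. With that patch the argument closes; alternatively, citing \cite{Leblank} outright, as the paper does, is entirely acceptable.
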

  It should be noted that the number of faulty nodes can be more than $f$ while they still form an $f$-local set. Thus, it  provides more freedom in the number possible faulty vehicles in Theorem \ref{thm:shreyashogantheoremmaster}. Lemma \ref{lem:rocbju} and Theorem \ref{thm:shreyashogantheoremmaster}  present the following theorem to show the ability of $\mathcal{P}(n,k)$ to perform robust distributed consensus. 
\begin{theorem}
Suppose the faulty vehicles form an $\lfloor \frac{k-1}{2}\rfloor$-local set in a $k$-nearest neighbor platoon. Then resilient asymptotic consensus on $\mathcal{P}(n,k)$ is reached under W-MSR dynamics, despite the action of faulty vehicles.
\label{thm:shreyashghgsmmaster}
\end{theorem}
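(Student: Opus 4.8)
The plan is to derive this statement as a direct corollary of Lemma~\ref{lem:rocbju} together with Theorem~\ref{thm:shreyashogantheoremmaster}, in the same way that Theorem~\ref{thm:robdeisdst} was obtained from Lemma~\ref{lem:kconnectedprop} and Theorem~\ref{thm:robdeist}. First I would set $f \triangleq \lfloor \frac{k-1}{2}\rfloor$, so that the hypothesis of the theorem reads ``the faulty vehicles form an $f$-local set.'' The goal is then to verify that the connectivity hypothesis of Theorem~\ref{thm:shreyashogantheoremmaster}, namely $(2f+1)$-robustness, is met by $\mathcal{P}(n,k)$.

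Next I would carry out the elementary arithmetic showing $2f+1 \le k$, splitting on the parity of $k$: if $k$ is odd, $k = 2m+1$, then $f = m$ and $2f+1 = k$; if $k$ is even, $k=2m$, then $f = m-1$ and $2f+1 = k-1 < k$. In either case $2f+1 \le k$. Then I would invoke Lemma~\ref{lem:rocbju}, which states that $\mathcal{P}(n,k)$ is $k$-robust, together with the observation (immediate from Definition~\ref{def:robustnessr}, since $r$-reachability of a set for a larger $r$ implies $r'$-reachability for every $r' \le r$) that a $k$-robust graph is $r$-robust for all $r \le k$. Hence $\mathcal{P}(n,k)$ is $(2f+1)$-robust.

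Finally, with the $f$-local hypothesis on the faulty set assumed and $(2f+1)$-robustness established, Theorem~\ref{thm:shreyashogantheoremmaster} applies verbatim and yields resilient asymptotic consensus under the W-MSR iteration~\eqref{eqn:partiasdal}, completing the proof.

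There is no serious obstacle here; the statement is essentially a repackaging of the $k$-robustness result of Section~\ref{sec:nettheprop} into the language of Theorem~\ref{thm:shreyashogantheoremmaster}. The only point requiring a moment's care is the parity check confirming $2f+1 \le k$ for the stated choice $f = \lfloor \frac{k-1}{2}\rfloor$, and the (routine) remark that $r$-robustness is monotone decreasing in $r$, so that $k$-robustness of $\mathcal{P}(n,k)$ suffices even when $2f+1$ is strictly smaller than $k$ (the even-$k$ case). It is worth noting in passing, as the paragraph following Theorem~\ref{thm:shreyashogantheoremmaster} already emphasizes, that the $f$-local assumption permits strictly more than $f$ faulty vehicles overall, so the conclusion is stronger than a naive ``up to $\lfloor\frac{k-1}{2}\rfloor$ faulty vehicles'' reading would suggest.
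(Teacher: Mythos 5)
Your proposal is correct and matches the paper's (implicit) argument exactly: the paper derives Theorem~\ref{thm:shreyashghgsmmaster} as an immediate consequence of Lemma~\ref{lem:rocbju} and Theorem~\ref{thm:shreyashogantheoremmaster}, which is precisely your route. Your explicit parity check that $2f+1 \le k$ for $f=\lfloor\frac{k-1}{2}\rfloor$ and the remark that $r$-robustness is monotone in $r$ are details the paper leaves unstated but that are needed and correct.
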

Fig. \ref{fig:sstss54dsg7vnj} confirms the connectivity condition proposed by Theorem \ref{thm:shreyashghgsmmaster} for distributed consensus in the presence of faulty vehicles. Here, there exists a single faulty vehicle in the network (whose state is shown with red dashed line) and it is shown that $\mathcal{P}(10,3)$ is robust enough to overcome the action of the faulty vehicle.

\begin{figure}[t!] 
\centering
\includegraphics[scale=.4]{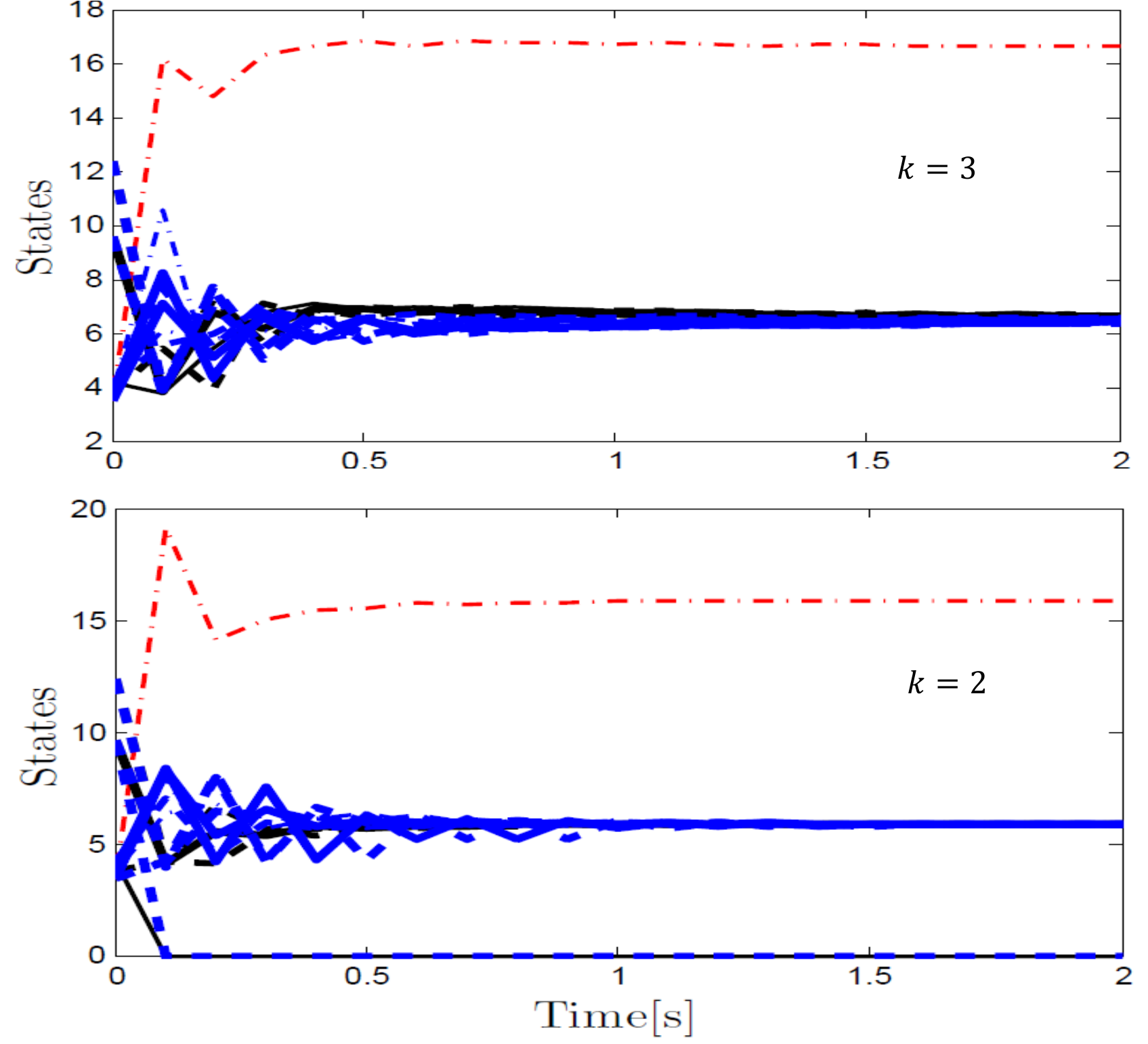}
\caption{Distributed consensus in the presence of a single faulty vehicle (red dashed line) for $\mathcal{P}(10,2)$ (bottom) and $\mathcal{P}(10,3)$ (top).}
\label{fig:sstss54dsg7vnj}
\end{figure}

\subsection{Network Formation in the Presence of Communication Disturbances}

The vehicle network formation is the third problem analyzed in this paper. {Let $p_i$ and $u_i$ denote the position and longitudinal velocity of vehicle $v_i$.} The objective is for each vehicle to maintain specific distances from its neighbors.  The desired vehicle formation will be formed by a specific constant distance $\Delta_{ij}$ between vehicles $v_i$ and $v_j$, which should satisfy $\Delta_{ij}=\Delta_{ik}+\Delta_{kj}$ for every triple $\{v_i,v_j,v_k\}  \subset\mathcal{V}$.  Considering the fact that each vehicle $v_i$ has access to its own position,  the positions of its neighbors,  and the desired inter-vehicular distances $\Delta_{ij}$, the control law for  vehicle $v_i$ is \cite{Hao}
\begin{align}
\ddot{p}_i(t)=\sum_{j\in \mathcal{N}_i}k_p \left (p_j(t)-p_i(t)+\Delta_{ij} \right )\nonumber\\
+k_u \left (u_j(t)-u_i(t)\right)+w_i(t),
\label{eqn:singlle}
\end{align}
where $k_p,k_u>0$ are control gains and $w_i(t)$ models communication disturbances.
Dynamics \eqref{eqn:singlle} in matrix form become
\begin{equation}
\dot{\boldsymbol{x}}(t)=\underbrace{\begin{bmatrix}
       \mathbf{0}_{n} & I_{n}         \\[0.3em]
     -k_pL & -k_uL
     \end{bmatrix}}_{A}{\boldsymbol{x}}(t)+\underbrace{\begin{bmatrix}
       \mathbf{0}_{n\times 1}        \\[0.3em]
    k_p\mathbf{\Delta}   
     \end{bmatrix}}_{B}+\underbrace{\begin{bmatrix}
       \mathbf{0}_{n}        \\[0.3em]
    I   
     \end{bmatrix}}_{F}\mathbf{w}(t),
\label{eqn:doub}
\end{equation}
 where $\boldsymbol{x}=[\mathbf{P} \hspace{2mm} \dot{\mathbf{P}}]^{\sf T}=[{{p}}_1, {{p}}_2, ..., {{p}}_n, \dot{{{p}}}_1, \dot{{{p}}}_2,  ..., \dot{{{p}}}_n]^{\sf T} $, $\mathbf{\Delta}=[\Delta_1, \Delta_2, ..., \Delta_n]^{\sf T}$ in which $\Delta_i=\sum_{j\in \mathcal{N}_i}\Delta_{ij}$. Here $\mathbf{w}(t)$ is the vector of disturbances. We want to quantify the effect of the communication disturbances on the inter-vehicular distances. For this, we need to define an appropriate performance measurement.  One such choice is
\begin{equation}
   \mathbf{y} =\mathcal{B}^{\sf T}{\mathbf{P}},
   \label{eqn:alternateoutput}
\end{equation}
where $\mathcal{B} \in \mathbb{R}^{n \times |\mathcal{E}|}$ is the incidence matrix associated with the network and $\mathbf{P}=[p_1, p_2, ..., p_n]^{\sf T}$ is the vector of positions. In this case we have an output associated with each connection, i.e., $y_{ij}={{p}}_i-{{p}}_j$ which is the distance between $v_i$ and $v_j$ at each time. With such performance output, we can quantify the sensitivity of inter-vehicular distances to communication disturbances. This sensitivity can be captured by an appropriate system norm from the disturbance signal to the desired output measurement. Here the system $\mathcal{H}_{\infty}$  norm is used which represents the worst case amplification of the disturbances over all frequencies and is widely used in the robustness analysis of vehicle platoons \cite{herman2015nonzero}. Such effect is discussed more formally in Theorem \ref{thm:sqrt23} which is proved in the Appendix.

\begin{theorem}
The system  $\mathcal{H}_{\infty}$ norm of \eqref{eqn:doub} from the external disturbances $\mathbf{w}(t)$ to $\mathbf{y} =\mathcal{B}^{\sf T}\mathbf{P}$ is 
\begin{align}
||G||_{\infty}=\begin{cases}
  \frac{2}{k_u\lambda_2\sqrt{4k_p-k_u^2\lambda_2}}, & \text{if }  \frac{\lambda_2k_u^2}{2k_p}\leq 1,\\
   \frac{1}{k_p\lambda_2^{\frac{1}{2}}} & \quad \text{otherwise}.\\
  \end{cases}
  \label{eqn:casess}
\end{align}
\label{thm:sqrt23}
\end{theorem}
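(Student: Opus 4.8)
The plan is to reduce the MIMO computation to a family of decoupled scalar second-order systems via the spectral decomposition of $L$, and then to perform a standard scalar $\mathcal{H}_\infty$ optimization. Since the constant term $B$ in \eqref{eqn:doub} does not enter the map $\mathbf{w}\mapsto\mathbf{y}$, the relevant transfer matrix is $G(s)=C(sI-A)^{-1}F$ with $C=[\,\mathcal{B}^{\sf T}\ \ \mathbf{0}\,]$ (from \eqref{eqn:alternateoutput}) and $F=[\,\mathbf{0}\ \ I_n\,]^{\sf T}$. Writing the (symmetric) Laplacian as $L=U\Lambda U^{\sf T}$ with $U$ orthogonal, $\Lambda=\diag(\lambda_1,\dots,\lambda_n)$ and $\lambda_1=0$, and using the block structure of $A$ and $F$, a direct computation gives $G(s)=\mathcal{B}^{\sf T}U\,\diag(H_1(s),\dots,H_n(s))\,U^{\sf T}$, where $H_i(s)=(s^2+k_u\lambda_i s+k_p\lambda_i)^{-1}$ is the transfer function of the $i$-th modal subsystem $\ddot\eta_i+k_u\lambda_i\dot\eta_i+k_p\lambda_i\eta_i=\tilde w_i$.

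Next I would exploit the identity $\mathcal{B}\mathcal{B}^{\sf T}=L$ from Section \ref{sec:deffin}, which gives $U^{\sf T}\mathcal{B}\mathcal{B}^{\sf T}U=\Lambda$, hence for every frequency $\omega$,
\[
G(j\omega)^{*}G(j\omega)=U\,\diag(\lambda_i\,|H_i(j\omega)|^2)\,U^{\sf T}.
\]
Therefore $\|G\|_\infty^2=\sup_\omega\lambda_{\max}(G(j\omega)^{*}G(j\omega))=\max_i\sup_\omega\lambda_i|H_i(j\omega)|^2$. Setting $G_i(s)=\sqrt{\lambda_i}\,H_i(s)$, this reads $\|G\|_\infty=\max_i\|G_i\|_\infty=\max_{i\ge 2}\|G_i\|_\infty$, since $G_1\equiv 0$ because $\lambda_1=0$; equivalently, the average-position (translation) mode is unobservable through $\mathbf{y}=\mathcal{B}^{\sf T}\mathbf{P}$ as $\mathcal{B}^{\sf T}\mathbf{1}=0$, which also accounts for the marginally stable double-integrator mode of $A$ not reaching the output.

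Then I would carry out the per-mode optimization. For a fixed $\lambda=\lambda_i>0$, $|G_i(j\omega)|^2=\lambda/((k_p\lambda-\omega^2)^2+k_u^2\lambda^2\omega^2)$, so $\sup_\omega|G_i(j\omega)|^2=\lambda/\min_{z\ge0}q(z)$ with $q(z)=z^2+(k_u^2\lambda^2-2k_p\lambda)z+k_p^2\lambda^2$ and $z=\omega^2$. The vertex of $q$ is $z^\star=\lambda(k_p-\tfrac12 k_u^2\lambda)$. If $z^\star>0$, i.e. $\tfrac{\lambda k_u^2}{2k_p}<1$, then $\min q=q(z^\star)=\tfrac14\lambda^3 k_u^2(4k_p-k_u^2\lambda)$ (with $4k_p-k_u^2\lambda>2k_p>0$), giving $\|G_i\|_\infty=\tfrac{2}{k_u\lambda\sqrt{4k_p-k_u^2\lambda}}$; if $z^\star\le0$, i.e. $\tfrac{\lambda k_u^2}{2k_p}\ge 1$, then $q$ is increasing on $[0,\infty)$ and $\min q=q(0)=k_p^2\lambda^2$, giving $\|G_i\|_\infty=\tfrac{1}{k_p\sqrt{\lambda}}$. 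A short check shows the two expressions coincide at $\lambda=2k_p/k_u^2$, so together they define a continuous function $\Phi(\lambda)$ on $(0,\infty)$.

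Finally I would prove $\Phi$ is monotonically decreasing, so that $\max_{i\ge2}\Phi(\lambda_i)=\Phi(\lambda_2)$, which is precisely \eqref{eqn:casess}. On the branch $\lambda\ge 2k_p/k_u^2$ this is clear since $\Phi(\lambda)=1/(k_p\sqrt\lambda)$; on the branch $0<\lambda<2k_p/k_u^2$ one has $\Phi(\lambda)^{-2}=\tfrac14 k_u^2\lambda^2(4k_p-k_u^2\lambda)$, and $\tfrac{d}{d\lambda}(\lambda^2(4k_p-k_u^2\lambda))=\lambda(8k_p-3k_u^2\lambda)>0$ there (since $3k_u^2\lambda<6k_p<8k_p$), so $\Phi$ is decreasing on this range too. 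I do not expect a deep obstacle here; the two subtle points are (i) recognizing that the output weight $\mathcal{B}^{\sf T}$ makes the modal decomposition an honest decoupling precisely because $\mathcal{B}\mathcal{B}^{\sf T}=L$, together with the disappearance of the zero mode, and (ii) noting that distinct Laplacian eigenvalues may fall in different branches of the frequency-maximization, which is why the monotonicity argument reducing everything to $\lambda_2$ is the real content of the proof.
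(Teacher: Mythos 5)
Your proposal is correct and follows essentially the same route as the paper's proof: reduce the output weight via $\mathcal{B}\mathcal{B}^{\sf T}=L$, diagonalize $L$ to decouple the dynamics into scalar second-order modes with transfer functions $\sqrt{\lambda_i}/(s^2+k_u\lambda_i s+k_p\lambda_i)$, optimize each mode over frequency, discard the zero mode, and take the maximum at $\lambda_2$ by monotonicity. If anything, your handling of the monotonicity across the two branches (continuity at $\lambda=2k_p/k_u^2$ plus the per-branch derivative checks) is more explicit than the paper's, which simply asserts that $\|\tilde{G}_i\|_{\infty}$ is a decreasing function of $\lambda_i$.
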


Based on the above theorem, the algebraic connectivity of the network, $\lambda_2$, plays a major role in the $\mathcal{H}_{\infty}$ performance of the system. Hence, beside network connectivity and network robustness metrics mentioned in the previous sections, the algebraic connectivity is the third connectivity metric we consider for the performance of $k$-nearest neighbor platoons. One can  combine Proposition \ref{prop:algconn} and Theorem~\ref{thm:sqrt23} to find explicit graph-theoretic bounds on the system $\mathcal{H}_{\infty}$ norm of \eqref{eqn:doub}, as shown in Fig. \ref{fig:sstssddvnj}.

\begin{figure}[t!]
\centering
\includegraphics[scale=.39]{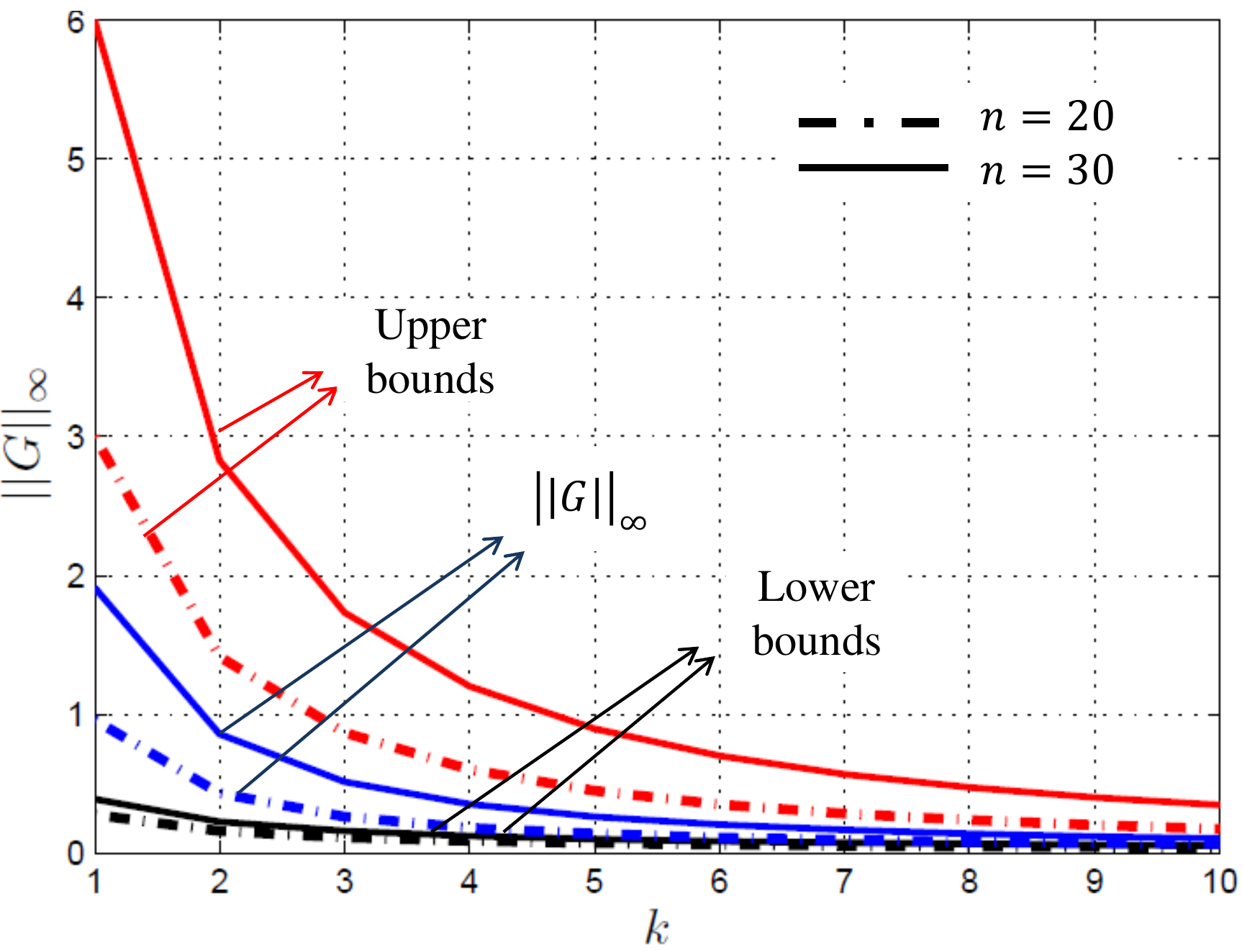}
\caption{Dependence of $\mathcal{H}_{\infty}$ norm of \eqref{eqn:doub} on network size $n$ and connectivity $k$ and bounds \eqref{eqn:twolowerbounds} on the algebraic connectivity.}
\label{fig:sstssddvnj}
\end{figure}

 This figure shows how the network local connectivity, captured by $k$, and the network network size $n$ have opposite effects on the system $\mathcal{H}_{\infty}$ norm of \eqref{eqn:doub}. Moreover, the upper and lower bounds can provide easily commutable necessary and sufficient conditions for having system $\mathcal{H}_{\infty}$ norm less than a certain number, instead of directly calculating the algebraic connectivity and \eqref{eqn:casess}. Fig. \ref{fig:sstss25ddvnj} shows the considerable  effect of increasing the connectivity index $k$ on the $\mathcal{H}_{\infty}$ performance of dynamics \eqref{eqn:doub} with parameters $k_p=5$ and $k_u=10$. According to this figure, for the traditional 1-nearest neighbor platoon with size $n=20$, the  $\mathcal{H}_{\infty}$ norm is about 2, while it drops below 1 for $k=2$ and below 0.5 for $k=4$. This shows a quadratic effect of increasing the connectivity index $k$ on the system $\mathcal{H}_{\infty}$ performance, as predicted by bounds in \eqref{eqn:twolowerbounds}.
 
 \begin{figure}[t!]
\centering
\includegraphics[scale=.42]{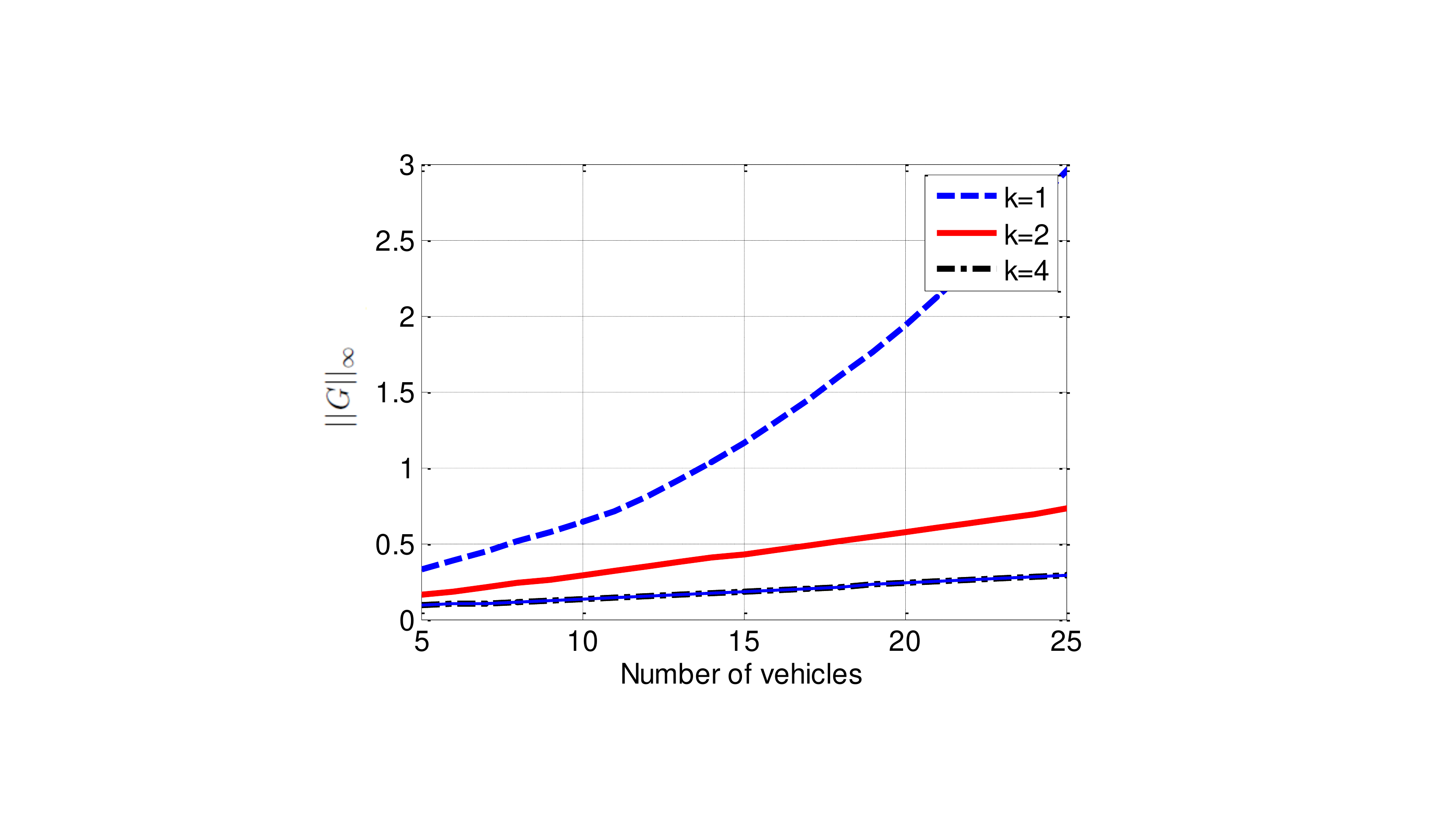}
\caption{The effect of increasing connectivity $k$ on the  $\mathcal{H}_{\infty}$ performance of dynamics \eqref{eqn:doub}.}
\label{fig:sstss25ddvnj}
\end{figure}

\subsection{Effect of the Network Scaling}

This subsection introduces the effect of the network scaling on the performance of each of the three distributed estimation and control algorithms on $\mathcal{P}(n,k)$ discussed in this section.

\textbf{Robust Distributed Estimation:} $k$-nearest neighbor platoons are secure networks in performing distributed estimation algorithms, as Theorem \ref{thm:robdeisdst} shows that it is possible to perform distributed estimation in the presence of up to $\lfloor \frac{k-1}{2} \rfloor$ faulty vehicles. Such a robustness metric  depends only on parameter $k$ and it is independent of the network size $n$. Hence, the performance of the distributed estimation algorithm is  only the function of local interactions of each agent.

\textbf{Robust Consensus:}  $k$-nearest neighbor platoons are also secure networks for distributed consensus, as Theorem \ref{thm:shreyashogantheoremmaster} indicates that they can tolerate up to $\lfloor \frac{k-1}{2} \rfloor$ faulty vehicles. Similar to distributed estimation case, the performance of the robust consensus algorithm is independent of the network size and managing the local interactions, i.e., parameter $k$, is sufficient to yield the desired performance.

\textbf{$\mathcal{H}_{\infty}$ Robustness to Disturbances:} According to Proposition \ref{prop:algconn}, the algebraic connectivity of large scale $k$-nearest neighbor platoons is not large, i.e., they are not good expanders. Hence, based on Theorem \ref{thm:sqrt23} and what is shown in Fig. \ref{fig:sstssddvnj}, the ability of these networks to mitigate the effect of disturbances, deceases as the size of the network increases. Unlike the two previous robustness metrics, the system $\mathcal{H}_{\infty}$  norm for fixed $k$ depends on the network size. Thus,  in order to make $\mathcal{P}(n,k)$ robust to external disturbances, it is required to adapt the number of local interactions $k$ with the network size $n$.


\section{Summary and Conclusions}
\label{sec:conclusion}
This paper investigates some network connectivity measures of $k$-nearest neighbor platoons. Explicit expressions, or graph-theoretic bounds, for each connectivity measure was proposed. Then these values were applied to provide graph-theoretic  conditions for three distributed estimation control algorithms and the results where demonstrated via simulations. It is shown that extending the traditional platooning topologies (which were based on interacting with the nearest neighbor) to $k$-nearest neighbors increases the resilience of distributed estimation and control algorithms to communication failures and external disturbances. A potential future direction is to analyze the performance of $k$-nearest neighbor platoons in more complicated maneuvers, e.g., traffic merging, and investigate the performance of the estimation and control algorithms for this particular topology of vehicle network.  

\bibliographystyle{IEEEtran}
\bibliography{refs}

\begin{thebibliography}{10}
\providecommand{\url}[1]{#1}
\csname url@rmstyle\endcsname
\providecommand{\newblock}{\relax}
\providecommand{\bibinfo}[2]{#2}
\providecommand\BIBentrySTDinterwordspacing{\spaceskip=0pt\relax}
\providecommand\BIBentryALTinterwordstretchfactor{4}
\providecommand\BIBentryALTinterwordspacing{\spaceskip=\fontdimen2\font plus
\BIBentryALTinterwordstretchfactor\fontdimen3\font minus
  \fontdimen4\font\relax}
\providecommand\BIBforeignlanguage[2]{{%
\expandafter\ifx\csname l@#1\endcsname\relax
\typeout{** WARNING: IEEEtran.bst: No hyphenation pattern has been}%
\typeout{** loaded for the language `#1'. Using the pattern for}%
\typeout{** the default language instead.}%
\else
\language=\csname l@#1\endcsname
\fi
#2}}

\bibitem{Stankovic}
J.~A. Stankovic, ``Research directions for the internet of things,'' \emph{IEEE
  Internet of Things Journal}, pp. 3--9, 2014.

\bibitem{Shensherman}
N.~Lu, N.~Cheng, and N.~Zhang, ``Connected vehicles: Solutions and
  challenges,'' \emph{IEEE Internet of Things Journal}, vol.~1, pp. 289--299,
  2014.

\bibitem{Hedrikburelli}
Y.~Zheng, S.~E. Li, K.~Li, F.~Borrelli, and J.~K. Hedrick, ``Distributed model
  predictive control for heterogeneous vehicle platoons under unidirectional
  topologies,'' \emph{IEEE Transactions on Control Systems Technology},
  vol.~25, pp. 899--910, 2017.

\bibitem{Trimpe}
M.~Muehlebach and S.~Trimpe, ``Distributed event-based state estimation for
  networked systems: An lmi-approach,'' \emph{IEEE Transactions on Automatic
  Control (to appear)}, 2017.

\bibitem{BartBesselink}
V.~Turri, B.~Besselink, and K.~H. Johansson, ``Cooperative look-ahead control
  for fuel-efficient and safe heavy-duty vehicle platooning,'' \emph{IEEE
  Transactions on Control Systems Technology}, vol.~25, pp. 12--28, 2017.

\bibitem{PiraniITS}
M.~Pirani, E.~Hashemi, J.~W. Simpson-Porco, B.~Fidan, and A.~Khajepour, ``A
  graph theoretic approach to the robustness of k-nearest neighbor vehicle
  platoons,'' \emph{IEEE Transactions Intelligent Transportation Systems (to
  appear)}, 2017.

\bibitem{Leonard}
K.~E. Fitch and N.~E. Leonard, ``Information centrality and optimal leader
  selection in noisy networks,'' \emph{Proceedings of IEEE Conference on
  Decision and Control}, pp. 7510--7515, 2013.

\bibitem{ArxiveRobutness}
M.~Pirani, E.~M. Shahrivar, B.~Fidan, and S.~Sundaram, ``Robustness of leader -
  follower networked dynamical systems,'' \emph{IEEE Transactions on Control of
  Network Systems}, 2017.

\bibitem{bamieh2012coherence}
B.~Bamieh, M.~R. Jovanovi{\'c}, P.~Mitra, and S.~Patterson, ``Coherence in
  large-scale networks: Dimension-dependent limitations of local feedback,''
  \emph{Automatic Control, IEEE Transactions on}, vol.~57, no.~9, pp.
  2235--2249, 2012.

\bibitem{piranicdc}
M.~Pirani, E.~M. Shahrivar, and S.~Sundaram, ``Coherence and convergence rate
  in networked dynamical systems,'' \emph{Proceedings of CDC 2015, the 54th
  IEEE Conference on Decision and Control}, 2015.

\bibitem{Pasqualetti}
F.~Pasqualetti, S.~Zampieri, and F.~Bullo, ``Controllability metrics,
  limitations and algorithms for complex networks,'' \emph{Control of Network
  Systems, IEEE Transactions on}, vol.~1, pp. 40--52., 2014.

\bibitem{Sundaram2011}
S.~Sundaram and C.~N. Hadjicostis, ``Distributed function calculation via
  linear iterative strategies in the presence of malicious agents,'' \emph{IEEE
  Transactions on Automatic Control}, vol.~56, no.~7, pp. 1495--1508, 2011.

\bibitem{BartBesselink2}
B.~Besselink, V.~Turri, S.~H. van~de Hoef, K.~Y. Liang, A.~Alam, J.~Martensson,
  and K.~H. Johansson, ``Cyber–physical control of road freight transport,''
  \emph{IEEE Transactions on Control Systems Technology}, vol. 104, pp.
  1128--1141, 2016.

\bibitem{LiangJohansson}
K.~Y. Liang, J.~Martensson, and K.~H. Johansson, ``Heavy-duty vehicle platoon
  formation for fuel efficiency,'' \emph{IEEE Transactions on Intelligent
  Transportaion Systems}, vol.~17, pp. 1051--1061, 2016.

\bibitem{Harvey}
H.~J. Miller and S.~L. Shaw, ``Geographic information systems for
  transportation,'' \emph{Oxford University Press. ISBN 0-19-512394-8}, 2001.

\bibitem{ChungSpectral}
F.~Chung, \emph{Spectral Graph Theory}.\hskip 1em plus 0.5em minus 0.4em\relax
  American Mathematical Society, 1997.

\bibitem{Godsil}
C.~Godsil and G.~Royle, \emph{Algebraic Graph Theory}.\hskip 1em plus 0.5em
  minus 0.4em\relax Springer, 2001.

\bibitem{Fiedler}
M.~Fiedler, ``Algebraic connectivity of graphs,'' \emph{Czechoslovak
  Mathematical Journal}, vol.~23, no.~2, pp. 298--305, 1973.

\bibitem{Ebiautomatica}
E.~M. Shahrivar, M.~Pirani, and S.~Sundaram, ``Spectral and structural
  properties of random interdependent networks,'' in \emph{Automatica}.\hskip
  1em plus 0.5em minus 0.4em\relax IFAC, 2013, pp. 234--242.

\bibitem{Leblank}
H.~J. LeBlanc, H.~Zhang, X.~Koutsoukos, and S.~Sundaram, ``Resilient asymptotic
  consensus in robust networks,'' in \emph{Selected Areas in Communications,
  IEEE Journal on}.\hskip 1em plus 0.5em minus 0.4em\relax IEEE, 2013, pp.
  766--781.

\bibitem{Hogan}
------, ``Resilient asymptotic consensus in robust networks,'' \emph{IEEE
  Journal on Selected Areas in Communications}, vol.~31, pp. 766--781, 2013.

\bibitem{Hoory}
S.~Hoory, N.~Linial, and A.~Wigderson, ``Expander graphs and their
  applications,'' vol.~17, pp. 319--327, 2006.

\bibitem{Sundaram}
S.~Sundaram and C.~N. Hadjicostis, ``Distributed function calculation and
  consensus using linear iterative strategies,'' \emph{IEEE Journal on Selected
  Areas in Communications}, vol.~26, pp. 650--660, 2008.

\bibitem{Hajitouri}
C.~N. Hadjicostis and R.~Touri, ``Feedback control utilizing packet dropping
  network links,'' \emph{Proceedings of the 41st IEEE Conference on Decision
  and Control}, pp. 1205--1210, 2002.

\bibitem{Sieler}
P.~Seiler and R.~Sengupta, ``Analysis of communication losses in vehicle
  control problems,'' \emph{In Proceedings of the 2001 American Control
  Conference}, pp. 1491--1496, 2001.

\bibitem{Hespsurvey}
J.~P. Hespanha, P.~Naghshtabrizi, and Y.~Xu, ``A survey of recent results in
  networked control systems,'' \emph{Proceedings of the IEEE}, pp. 138--162,
  2007.

\bibitem{Reinschke}
K.~Reinschke, ``Multivariable control: A graph-theoretic approach,''
  \emph{Springer}, 1987.

\bibitem{ShreyasHogan}
H.~Zhang, E.~Fata, and S.~Sundaram, ``A notion of robustness in complex
  networks,'' in \emph{Control of Network Systems, IEEE Transactions on}.\hskip
  1em plus 0.5em minus 0.4em\relax IEEE, 2015, pp. 310--320.

\bibitem{Hao}
H.~Hao, P.~Barooah, and J.~J.~P. Veerman, ``Effect of network structure on the
  stability margin of large vehicle formation with distributed control,''
  \emph{IEEE Conference on Decision and Control}, pp. 4783--4788, 2010.

\bibitem{herman2015nonzero}
I.~Herman, D.~Martinec, Z.~Hur{\'a}k, and M.~Sebek, ``Nonzero bound on fiedler
  eigenvalue causes exponential growth of h-infinity norm of vehicular
  platoon,'' \emph{Automatic Control, IEEE Transactions on}, vol.~60, no.~8,
  pp. 2248--2253, 2015.

\bibitem{PiraniJohnCDC}
M.~Pirani, J.~W. Simpson-Porco, and B.~Fidan, ``System-theoretic performance
  metrics for low-inertia stability of power networks,'' in \emph{IEEE
  Conference on Decision and Control}.\hskip 1em plus 0.5em minus 0.4em\relax
  IEEE, 2017.

\end{thebibliography}

\begin{appendix}

\begin{center}
Proof of Theorem \ref{thm:sqrt23}
\end{center}

\begin{proof}
First we show that the system $\mathcal{H}_{\infty}$ norms of \eqref{eqn:doub} from disturbance signals $\mathbf{w}(t)$ to performance outputs  $\mathbf{y} =\mathcal{B}^{\sf T}{\mathbf{P}}$ and  $\mathbf{y} =L^{\frac{1}{2}}{\mathbf{P}}$ are the same. For the output measurement $\mathbf{y}=\mathcal{B}^{\sf T}{\mathbf{P}}$ we have $G^*G=F^{\sf T}(s^*I-A)^{-\sf T}\mathcal{B}\mathcal{B}^{\sf T}(sI-A)^{-1}F=F^{\sf T}(s^*I-A)^{-\sf T}L(sI-A)^{-1}F$ and as system $\mathcal{H}_{\infty}$ norm is a function of the spectrum of $G^*G$, identical results will be obtained as if one used $\mathbf{y} =L^{\frac{1}{2}}{\mathbf{P}}$ instead of $\mathbf{y} =\mathcal{B}^{\sf T}{\mathbf{P}}$. Hence, it is sufficient to find the system $\mathcal{H}_{\infty}$ norm of \eqref{eqn:doub} from disturbances to $\mathbf{y} =L^{\frac{1}{2}}{\mathbf{P}}$. Let $\Lambda = V^{\sf T}LV$ be the eigendecomposition of $L$, where $V$ may be taken to be orthogonal. Consider the invertible change of states $\tilde{\boldsymbol{x}} = (V^{\sf T}\boldsymbol{x},V^{\sf T}\dot{\boldsymbol{x}})$. Then a straightforward computation shows that
\begin{equation}\label{Eq:SwingTransformed1}
\begin{aligned}
\dot{\tilde{\boldsymbol{x}}} &= \begin{bmatrix}
0 & I_n\\ -k_p\Lambda & -k_u\Lambda
\end{bmatrix}\tilde{\boldsymbol{x}} + \begin{bmatrix}
0 \\ V^{\sf T}
\end{bmatrix}w\\
y &= \begin{bmatrix}
L^{\frac{1}{2}}V & 0
\end{bmatrix}\tilde{\boldsymbol{x}}\,.
\end{aligned}
\end{equation}
The model \eqref{Eq:SwingTransformed1} has the same transfer function as \eqref{eqn:doub}, and hence the same system norm. Now consider an input/output transformation on \eqref{Eq:SwingTransformed1}, where $\bar{y} = V^{\sf T}y$ and $\bar{w} = V^{\sf T}w$\,, knowing the fact that such input/output transformation preserves the system $\mathcal{H}_{\infty}$ norm \cite{PiraniJohnCDC}. Hence, the transformed system
\begin{equation}\label{Eq:SwingTransformed2}
\begin{aligned}
\dot{\tilde{\boldsymbol{x}}} &= \begin{bmatrix}
0 & I_n\\ -k_p\Lambda & -k_u\Lambda
\end{bmatrix}\tilde{\boldsymbol{x}} + \begin{bmatrix}
0 \\ \underbrace{V^{\sf T}V}_{= I_n}
\end{bmatrix}\bar{w}\\
\bar{y} &= 
\underbrace{
\begin{bmatrix}
V^{\sf T}L^{\frac{1}{2}}V & 0
\end{bmatrix}}_{=\begin{bmatrix}\Lambda^{\frac{1}{2}} & 0 \end{bmatrix}}
\tilde{\boldsymbol{x}}\,.
\end{aligned}
\end{equation}
has the same system norm as \eqref{Eq:SwingTransformed1}. The system \eqref{Eq:SwingTransformed2} is comprised of $n$ decoupled subsystems, each of the form
\begin{equation}\label{Eq:SwingTransformed3}
\begin{aligned}
\dot{\tilde{\boldsymbol{x}}}_i &= \begin{bmatrix}
0 & 1\\ -k_p\lambda_i & -k_u\lambda_i
\end{bmatrix}\tilde{\boldsymbol{x}}_i + \begin{bmatrix}
0 \\ 1
\end{bmatrix}\bar{w}_i\\
\bar{y}_i &= 
\begin{bmatrix}
\lambda_i^{\frac{1}{2}} & 0
\end{bmatrix}
\tilde{\boldsymbol{x}}_i\,.
\end{aligned}
\end{equation}
with transfer functions
$$
\tilde{G}_i(s) = \frac{\lambda_i^{\frac{1}{2}}}{s^2 + k_u\lambda_is + k_p\lambda_i}\,, \qquad i \in \{1,\ldots,n\}\,.
$$
which gives $\tilde{G}_1(s) = 0$. For $i \in \{2,\ldots,n\}$,  we have 
\begin{align*}
|\tilde{G}_i(j\omega)|^2&=\tilde{G}_i(-j\omega)\tilde{G}_i(j\omega)=\frac{\lambda_i}{\underbrace{(k_p\lambda_i-\omega^2)^2+k_u^2\lambda_i^2\omega^2}_{f(\omega)}}.
\end{align*}
Maximizing $|\tilde{G}_i(j\omega)|^2$ with respect to $\omega$ is equivalent to minimizing $f(\omega)$. By setting $\frac{df(\omega)}{d\omega}=0$ we get $\bar{\omega}_1=0$ and $\bar{\omega}_2=({k_p\lambda_i}-\frac{1}{2}k_u^2\lambda_i^2)^{\frac{1}{2}}$ as critical points. Here 
$\bar{\omega}_2$ is the global minimizer of $f(\omega)$, unless $\frac{k_u^2\lambda_i}{2k_p}>1$. Substituting these critical values back into the formula for $|\tilde{G}_i(j\omega)|^2$, we find for $i \in \{2,\ldots,n\}$ that
\begin{align}
||\tilde{G}_i||_{\infty}= \begin{cases}
  \frac{2}{k_u\lambda_i\sqrt{4k_p-k_u^2\lambda_i}},  &  \text{if } \frac{\lambda_ik_u^2}{2k_p}\leq 1,\\
   \frac{1}{k_p\lambda_i^{\frac{1}{2}}} &  \text{otherwise}.\\
  \end{cases}
  \label{eqn:cascess}
\end{align}
Since $0 < \lambda_2 \leq \lambda_3 \leq \cdots \leq \lambda_n$  and $||\tilde{G}_i||_{\infty}$ is a monotonically decreasing function of $\lambda_i$,  the result follows.
\end{proof}

\end{appendix}

\end{document}